\numberwithin{equation}{section}
\def\vint{\mathop{\mathchoice%
         {\setbox0\hbox{$\displaystyle\intop$}\kern 0.22\wd0%
          \vcenter{\hrule width 0.6\wd0}\kern -0.82\wd0}%
         {\setbox0\hbox{$\textstyle\intop$}\kern 0.2\wd0%
          \vcenter{\hrule width 0.6\wd0}\kern -0.8\wd0}%
         {\setbox0\hbox{$\scriptstyle\intop$}\kern 0.2\wd0%
          \vcenter{\hrule width 0.6\wd0}\kern -0.8\wd0}%
         {\setbox0\hbox{$\scriptscriptstyle\intop$}\kern 0.2\wd0%
          \vcenter{\hrule width 0.6\wd0}\kern -0.8\wd0}}%
         \mathopen{}\int}
\newtheorem{thm}{Theorem}[section]
\newtheorem{cor}[thm]{Corollary}
\newtheorem{lem}[thm]{Lemma}
\newtheorem{prop}[thm]{Proposition}
\newtheorem{claim}{Claim}[section]
\newtheorem{subclaim}{Subclaim}
\newtheorem{conj}[equation]{Conjecture}
\newtheorem{case}{Case}[section]
\newtheorem*{mysolution}{Solution}
\newtheorem{step}{Step}[section]
\theoremstyle{definition}
\newtheorem{defn}[thm]{Definition}
\newtheorem{example}[thm]{Example}
\newtheorem{prob}[equation]{Problem}
\newtheorem{ques}[equation]{Question}
\newtheorem{rem}{Remark}[section]
\newtheorem{rems}{Remarks}[section]
\newcounter {own}
\def\theown {\thesection       .\arabic{own}}
\newenvironment{pf}[1][]{%
 \vskip 3mm
 \noindent
 \ifthenelse{\equal{#1}{}}%
  {{\slshape Proof. }}%
  {{\slshape #1.} }%
 }%
{\qed\bigskip}
\newcounter{alphabet}
\newcommand{\RNum}[1]{\uppercase\expandafter{\romannumeral #1\relax}}
\def\be{\begin{equation}}
\def\ee{\end{equation}}
\newcommand{\ben}{\begin{enumerate}}
\newcommand{\een}{\end{enumerate}}
\newcommand{\blem}{\begin{lem}}
\newcommand{\elem}{\end{lem}}
\newcommand{\bthm}{\begin{thm}}
\newcommand{\ethm}{\end{thm}}
\newcommand{\bcor}{\begin{cor}}
\newcommand{\ecor}{\end{cor}}
\newcommand{\beg}{\begin{examp}}
\newcommand{\eeg}{\end{examp}}
\newcommand{\begs}{\begin{examples}}
\newcommand{\eegs}{\end{examples}}
\newcommand{\bdefe}{\begin{defn}}
\newcommand{\edefe}{\end{defn}}
\newcommand{\bprob}{\begin{prob}}
\newcommand{\eprob}{\end{prob}}
\newcommand{\bques}{\begin{ques}}
\newcommand{\eques}{\end{ques}}
\newcommand{\bei}{\begin{itemize}}
\newcommand{\eei}{\end{itemize}}
\newcommand{\bcl}{\begin{claim}}
\newcommand{\ecl}{\end{claim}}
\newcommand{\bscl}{\begin{subclaim}}
\newcommand{\escl}{\end{subclaim}}
\newcommand{\bca}{\begin{case}}
\newcommand{\eca}{\end{case}}
\newcommand{\bstep}{\begin{step}}
\newcommand{\estep}{\end{step}}
\newcommand{\bsol}{\begin{mysolution}}
\newcommand{\esol}{\end{mysolution}}
\newcommand{\bcon}{\begin{conj}}
\newcommand{\econ}{\end{conj}}
\newcommand{\bcons}{\begin{conjs}}
\newcommand{\econs}{\end{conjs}}
\newcommand{\bprop}{\begin{prop}}
\newcommand{\eprop}{\end{prop}}
\newcommand{\br}{\begin{rem}}
\newcommand{\er}{\end{rem}}
\newcommand{\brs}{\begin{rems}}
\newcommand{\ers}{\end{rems}}
\newcommand{\bo}{\begin{obser}}
\newcommand{\eo}{\end{obser}}
\newcommand{\bos}{\begin{obsers}}
\newcommand{\eos}{\end{obsers}}
\newcommand{\bpf}{\begin{pf}}
\newcommand{\epf}{\end{pf}}
\newcommand{\ba}{\begin{array}}
\newcommand{\ea}{\end{array}}
\newcommand{\beq}{\begin{eqnarray}}
\newcommand{\beqq}{\begin{eqnarray*}}
\newcommand{\eeq}{\end{eqnarray}}
\newcommand{\eeqq}{\end{eqnarray*}}
\begin{document}
\title{\Large\bf Characterizations for the existence of traces of first-order Sobolev spaces on hyperbolic fillings
 \footnotetext{\hspace{-0.35cm}
 $2010$ {\it Mathematics Subject classfication}: 46E35, 31E05
 \endgraf{{\it Key words and phases}: Characterization, Trace operator, Trace function, Hyperbolic filling, first-order Sobolev space}
% \endgraf{Authors have been supported by the Academy of Finland (project No. 323960)}
}
}
\author{Manzi Huang and Zhihao Xu}
\date{ }
\maketitle
%\tableofcontents
\begin{abstract}
In this paper, we study the existence of traces for Sobolev spaces on the hyperbolic filling $X$ of a compact metric space $Z$ equipped with a doubling measure. Given a suitable metric on $X$, we can regard $Z$ as the boundary of $X$. After equipping $X$ with a weighted measure $\mu$ via the measure on $Z$ and the Euclidean arc length, we give characterizations for the existence of traces for first-order Sobolev spaces.
\end{abstract}

\section{Introduction}\label{sec-1}

Let $(Z, d_{Z}, \nu)$ be a compact metric space equipped with a doubling measure $\nu$ and $0<\text{diam}Z<1$. Then a hyperbolic filling of the underlying space $Z$ is a simplicial graph  with vertex set $V$ and edge set $E$. It carries a natural path metric obtained by identifying each edge with a copy of the unit interval. It is known that each hyperbolic filling is Gromov hyperbolic; see \cite[Chapter 6]{BuSc}. The technique of hyperbolic fillings of compact metric spaces equipped with doubling measures was introduced in Bj\"{o}rn-Bj\"{o}rn-Shanmugalingam \cite{BBS}, Bonk-Saksman \cite{BS} and  Bourdon-Pajot \cite{BP03}. In these papers, the constructions of hyperbolic fillings are different. We refer the interested readers to \cite[Section 3]{BBS} for discussions on their differences.  In this paper, we adopt the construction in \cite{BBS}; see Section \ref{sec-2} for details.

For a given  hyperbolic filling $X$ of the underlying space $Z$, we exploit the metric $d$ defined in \eqref{metric-def} below. Let $\overline{X}$ be the completion of $X$ with respect to $d$, and let $\partial X=\overline{X}\backslash X$, the uniformized boundary of $X$.
By \cite[Proposition 4.4]{BBS}, we know that the uniformized boundary $\partial X$ and $Z$ are biLipschitz equivalent, and thus, in the following, we identify $\partial X$ and $Z$ via the biLipschitz correspondence.
On $X$, we also use the measure $\mu$ defined in \eqref{-measure} below, as a result, we get a metric measure space $(X, d, \mu)$.

The motivation of this paper comes from the study of trace existence on regular trees in \cite{KNW}. It was shown that the existence of traces for first-order Sobolev spaces on  regular trees can be characterized by a parameter related to certain weighted distance and certain weighted measure defined on regular trees. Note that by \cite[Theorem 7.1]{BBS}, any regular tree can be regarded as a hyperbolic filling of its uniformalized boundary. Naturally, one will ask if there are any results similar to those in \cite{KNW} for general hyperbolic fillings.
The main purpose of this paper is to discuss this question. To state our results, we need to introduce some definitions and notations.

For any $\xi\in Z$, let $[v_0, \xi)$  denote a geodesic ray from the root $v_0$ of $X$ to $\xi$; see Definition \ref{geodesic-ray} for the precise definition of geodesic rays on $X$. Given a measurable function $u$ on $X$, we say that its {\it trace $($function$)$}  $\mathscr Tu$ exists if for $\nu$-almost every $\xi\in Z$,
\begin{equation}\label{equ-wetrf}
\mathscr Tu(\xi):=\lim_{[v_{0},\xi)\ni x\rightarrow\xi}u(x)
\end{equation}
exists and is independent of the choice of the geodesic rays from $v_{0}$ to $\xi$. Here the independence of the choice of the geodesic rays from $v_{0}$ to $\xi$ is necessary and important since unlike the regular tree case,  in hyperbolic fillings, the geodesic ray from a root to a boundary point may be not unique.

Obviously, it follows from the definition that if $\mathscr T$ exists, then it must be linear.

For convenience, if the limit $\lim_{[v_{0},\xi)\ni x\rightarrow\xi}u(x)$ does not exist, then we say that {\it $u$ has no limit along $[v_{0},\xi)$}.

In order to characterize the existence of traces for first-order Sobolev spaces, we introduce two parameters $R_{p,\rho}$ and $\mathcal{R}_{p,\rho}$ for $1\leq p<\infty$ as follows. For a Borel function $\rho:[0,\infty)\rightarrow(0,\infty)$ satisfying $\rho\in L_{\text{loc}}^{1}([0,\infty))$, let
\begin{equation}\label{Rp-1}
R_{p,\rho}=
\left\{\begin{array}{cl}
\int_{0}^{\infty}e^{-\frac{\epsilon p}{p-1}t}\rho(t)^{\frac{1}{1-p}}dt, &\ \text{if} \ \ p>1, \\
\|e^{-\epsilon t}\rho(t)^{-1}\|_{L^{\infty}([0,\infty))},&\ \text{if} \ \ p=1,
\end{array}\right.
\end{equation}
where $\epsilon=\log \alpha$ and $\alpha>1$ is one of the construction parameters of $X$; see Section \ref{sec-2} for details.

When $\mu(X)<\infty$, let $\mathcal{R}_{p,\rho}=R_{p,\rho}$, and when $\mu(X)=\infty$, let
\begin{equation}\label{Rp-2}
\mathcal{R}_{p,\rho}=
\left\{\begin{array}{cl}
\sup_{k\geq1}\int_{O_{k}}e^{-\frac{\epsilon p}{p-1}t}\rho(t)^{\frac{1}{1-p}}dt, &\,\text{if} \,\ p>1, \\
R_{1,\rho},&\,\text{if} \,\ p=1,
\end{array}\right.
\end{equation}
where $\{O_{k}\}=\{[t_{k},t_{k+1})\}_{k=1}^{\infty}$ is a sequence of subintervals of $[0,\infty)$ with $\bigcup_{k}O_{k}=[0,\infty)$, $\int_{O_{k}}\rho(t)dt=1$ for each $k$, and $O_{k}\cap O_{k'}=\emptyset$ for any $k\not= k'$.

We remark that if $\mu(X)=\infty$, then by Remark  \ref{rem-2.2} below, $\int_{0}^{\infty}\rho(t)dt=\infty$. This fact guarantees the existence of the needed sequence $\{O_{k}\}$, and so, the parameter $\mathcal{R}_{p,\rho}$ is well-defined since the assumption of $\rho$ being positive ensures that $\{O_{k}\}$ is unique. Also, it is easy to know that $R_{p,\rho}<\infty$ implies $\mathcal{R}_{p,\rho}<\infty$, while the converse does not hold true (cf. \cite{KNW}).

For $1\leq p<\infty$, let $\mathcal F_p$ denote the set of all Borel functions $\rho:[0,\infty)\rightarrow(0,\infty)$ satisfying $\rho\in L_{\text{loc}}^{1}([0,\infty))$ and
\begin{equation}\label{addition-assumption}
\left\{\begin{array}{cc}
e^{-\epsilon pt}\rho(t)^{-1}\in L^{1/(p-1)}_{\text{loc}}([0,\infty)),&\  \text{if}\ \  p>1,\\
 e^{-\epsilon t}\rho(t)^{-1}\in L^{\infty}_{\text{loc}}([0,\infty)),&\  \text{if} \ \  p=1.
 \end{array}\right.
 \end{equation}

Let $N^{1,p}(X):=N^{1,p}(X, d, \mu)$, $1\leq p<\infty$, the first-order Sobolev space, and let $\dot N^{1,p}(X):= \dot N^{1,p}(X, d, \mu)$, the homogeneous first-order Sobolev space  (see Definitions \ref{def-2.5} and \ref{def-2.6} below for their precise definitions).
 Our results concerning the existence of traces are as follows.

\begin{thm}\label{thm-1.1}
Let $1\leq p<\infty$ and $\rho\in \mathcal F_p$. Then the following statements hold true.

$(i)$  The parameter $\mathcal{R}_{p,\rho}<\infty$ if and only if the trace function $\mathscr Tu$ exists for every $u\in N^{1,p}(X)$.

$(ii)$  The parameter $R_{p,\rho}<\infty$ if and only if the trace function $\mathscr Tu$ exists for every $u\in \dot N^{1,p}(X)$.
\end{thm}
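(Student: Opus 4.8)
The plan is to prove both parts by establishing, for each geodesic ray $[v_0,\xi)$, a pointwise control of the oscillation of $u$ along the ray in terms of the upper gradient $g$ of $u$ and the function $\rho$, and then to integrate this control over $Z$ against $\nu$. The starting point is the fundamental-theorem-of-calculus estimate along a geodesic ray: if $x,y$ lie on a common ray with $x$ an ancestor of $y$, then $|u(x)-u(y)| \le \int g\,ds$ along the connecting segment. Parametrizing the ray by the ``level'' variable $t$ (so that $t$ grows by $1$ with each generation and the metric $d$ contracts like $e^{-\epsilon t}$ by \eqref{metric-def}), I would apply Hölder's inequality with exponents $p$ and $p/(p-1)$, inserting the weight $\rho(t)^{1/p}\rho(t)^{-1/p}$, to split $\int g\,ds$ into a factor $\left(\int g^p \rho\,e^{?}\,ds\right)^{1/p}$ controlled by the Sobolev energy on a ``tube'' around the ray, times a factor that is exactly (a piece of) $R_{p,\rho}^{(p-1)/p}$ or $\mathcal R_{p,\rho}^{(p-1)/p}$. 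The condition $\rho\in\mathcal F_p$ from \eqref{addition-assumption} is precisely what makes the Hölder factor finite on bounded level-intervals, so that partial sums telescope and the limit defining $\mathscr Tu(\xi)$ in \eqref{equ-wetrf} exists as a Cauchy limit for $\nu$-a.e.\ $\xi$; the same estimate shows the limit is independent of the chosen ray, because two rays to the same $\xi$ eventually stay within bounded distance and the tail energy goes to zero.

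For the ``if'' directions I would make this quantitative. Summing the tube estimates over a maximal disjoint (or bounded-overlap) family of rays indexed by the vertices at a fixed generation, and using that the measure $\mu$ in \eqref{-measure} assigns to each edge at level roughly $t$ a mass comparable to $\nu$ of the associated ball times $\rho(t)$, one gets
\begin{equation*}
\int_Z |\mathscr Tu(\xi) - u(v_0)|^p \, d\nu(\xi) \;\lesssim\; \mathcal R_{p,\rho}^{\,p-1} \int_X g^p \, d\mu \quad (p>1),
\end{equation*}
with the evident $L^\infty$-type modification when $p=1$, and with $R_{p,\rho}$ in place of $\mathcal R_{p,\rho}$ when one only wants existence (not a global bound) — here the distinction between $R_{p,\rho}$ and $\mathcal R_{p,\rho}$ mirrors exactly the distinction in \cite{KNW} and corresponds to whether $\mu(X)$ is finite: when $\mu(X)=\infty$ one must localize to the blocks $O_k$ on which $\int_{O_k}\rho=1$, and a block-by-block Hölder estimate combined with a telescoping/Minkowski argument over $k$ converts $\sup_k\int_{O_k}(\cdots)$ into the bound, using that a function in $N^{1,p}(X)$ is in $L^p(\mu)$ so its ``averages'' over successive blocks form an $\ell^p$ sequence. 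For $\dot N^{1,p}(X)$ one drops the $L^p$ control of $u$ itself, which is why the correct parameter becomes the global $R_{p,\rho}$ rather than the localized $\mathcal R_{p,\rho}$.

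For the ``only if'' directions I would argue by contradiction: assuming $\mathcal R_{p,\rho}=\infty$ (resp.\ $R_{p,\rho}=\infty$), construct an explicit radial function $u$ on $X$ — depending only on the level $t$, of the form $u(x)=\int_0^{t(x)} \rho(s)^{1/(1-p)} e^{-\epsilon p s/(p-1)}\,w(s)\,ds$ for a suitable truncation weight $w$, respectively the $p=1$ analogue built from where $e^{-\epsilon t}\rho(t)^{-1}$ is large — whose minimal upper gradient is essentially $\rho(t)^{1/(1-p)}e^{-\epsilon ps/(p-1)}w(t)$, so that $\int_X g^p\,d\mu<\infty$ (putting $u\in N^{1,p}$ or $\dot N^{1,p}$) while $u(x)\to\infty$ along rays for a positive $\nu$-measure set of $\xi$, so no trace exists. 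The bookkeeping that $u$ is genuinely in $N^{1,p}(X)$ (not merely $\dot N^{1,p}$) in part $(i)$ again uses the block structure $\{O_k\}$ and the finiteness of each $\int_{O_k}(\cdots)$.

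The main obstacle I anticipate is the geometric step of replacing a single geodesic ray by a ``tube'' of controlled measure and of organizing the rays to all of $Z$ into an essentially disjoint family at each level — this is where hyperbolic fillings genuinely differ from regular trees (non-uniqueness of geodesic rays, and the branching/overlap structure of the graph), and it requires carefully relating the combinatorics of $X$ at level $t$ to the doubling geometry of $(Z,d_Z,\nu)$ and to the weight $\rho(t)$ appearing in $\mu$. Once the measure comparison ``$\mu$ on level-$t$ edges $\approx \rho(t)\,\nu(\text{associated balls})$'' is in hand, together with the independence-of-ray fact, the analytic core is the Hölder/telescoping argument above, which is essentially the tree argument of \cite{KNW} adapted verbatim.
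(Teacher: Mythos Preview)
Your ``if'' directions are essentially the paper's approach: H\"older along a ray, then integration over $Z$ via a Fubini-type lemma (the paper's Lemma~\ref{lem-2.8}) that converts $\int_Z\int_{[v_0,\xi)}$ into $\int_X$, with the block structure $\{O_k\}$ used when $\mu(X)=\infty$ exactly as in the paper's Lemma~\ref{lem-3.3}. One minor caution: your displayed inequality with $\mathcal R_{p,\rho}^{\,p-1}$ is not what is actually proved in the infinite-measure case --- there the paper does not obtain a global bound $|\mathscr Tu-u(v_0)|$ against the energy, but rather shows directly that $\sup_{t\in O_k}|f(x(t))|\to 0$ block-by-block, concluding $\mathscr Tf\equiv 0$. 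Also, your argument for independence of the ray (``two rays eventually stay within bounded distance and the tail energy goes to zero'') is not quite enough as stated: tail-energy smallness on one ray does not by itself control $u$ on a different ray. The paper instead interleaves two rays $J_1,J_2$ into a zigzag geodesic ray $J$ and observes that if $\lim_{J_1}u\neq\lim_{J_2}u$ then $u$ would have no limit along $J$, contradicting what was just shown for every ray.

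The genuine gap is in your ``only if'' direction for part $(i)$. You propose a single radial function with $u(x)\to\infty$ along rays as the counterexample for both $N^{1,p}$ and $\dot N^{1,p}$. That works for $\dot N^{1,p}$ (and is what the paper does in Lemma~\ref{lem-3.2}(1)), but it cannot work for $N^{1,p}$: when $\mu(X)=\infty$ a monotone function tending to $+\infty$ eventually satisfies $|u|\ge 1$ on a set of infinite $\mu$-measure, so $u\notin L^p(X,\mu)$; and even when $\mu(X)<\infty$ you have given no argument that $\int_X|u|^p\,d\mu<\infty$. The paper's counterexamples for $N^{1,p}$ are not monotone blow-ups but \emph{bounded oscillating} functions: on carefully chosen subintervals $I_k$ (or $(a_k,b_k)\subset O_k$) one builds a tent that rises from $0$ to $1$ and back to $0$, so that $0\le u\le 1$ everywhere (giving $u\in L^p$ immediately when $\mu(X)<\infty$, or via $\sum_k\int_{I_k}\rho<\infty$ in general), while along every ray $u$ hits $0$ and $1$ infinitely often and hence has no limit. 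The key extra idea you are missing is that, to force $u\in L^p$, one must trade the single divergent integral $\int_0^\infty e^{-\epsilon p t/(p-1)}\rho^{1/(1-p)}\,dt=\infty$ for an infinite family of subintervals on each of which the same integrand is large, and build \emph{localized} bumps rather than accumulate a global integral.
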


\begin{rem}
$(a)$ If $\mu(X)<\infty$, then the condition $\mathcal{R}_{p,\rho}=R_{p,\rho}<\infty$ guarantees that for every $u\in N^{1,p}(X)$, $\mathscr Tu$ exists, and $\mathscr T: N^{1,p}(X)\rightarrow L^{p}(Z,\nu)$
is a bounded operator, where $L^{p}(Z,\nu)$ is the $L^p$-space on $(Z, d_Z, \nu)$; see Lemma \ref{lem-3.1} below.
If $\mu(X)=\infty$, then the condition $\mathcal{R}_{p,\rho}<\infty$ implies that $\mathscr Tu(\xi)=0$ for $\nu$-almost every $\xi\in Z$; see Lemma \ref{lem-3.3} below.

$(b)$  The condition $R_{p,\rho}<\infty$ ensures that for every $u\in\dot N^{1,p}(X)$, $\mathscr Tu$ exists, and $\mathscr T:\dot N^{1,p}(X)\rightarrow L^{p}(Z,\nu)$
 is a bounded operator; see Lemma \ref{lem-3.1}  below.
\end{rem}

In \cite{BBS}, Bj\"{o}rn-Bj\"{o}rn-Shanmugalingam proved that for each $1\leq p<\infty$, the trace space of any Newtonian-Sobolev space, i.e., any first-order Sobolev space, $N^{1, p}(X)$ is the Besov space $B_{p, p}^{\theta}(Z)$ ($0<\theta<1$) when the underlying space $Z$ is compact and doubling, and when the hyperbolic filling $X$ is equipped with a measure
defined as in \eqref{-measure} with $\rho(t)=e^{-\epsilon p(1-\theta) t}$; see \cite[Theorem 1.1]{BBS} for details. Later, Butler \cite{BC} extended the extension and trace results in \cite{BBS} by weakening the hypothesis ``compactness" on $Z$ to the one ``completeness".
Notice that the trace functions $\tilde u$ defined in \cite{BBS} are different from the ones $\mathscr T u$ in this paper (see \cite{BBS} or Section \ref{sec-4} for the definition of $\tilde u$).  As the second purpose of this paper, we discuss the relationships between the definition of trace functions
$\mathscr Tu$ given in this paper and the one of $\widetilde{u}$ introduced in \cite{BBS}.
Our result is as follows.

\begin{thm}\label{thm-imply}
Let $u$ be a measurable function defined on $X$. Then the existence of $\mathscr Tu$ implies the existence of $\widetilde{u}$. If $\mathscr Tu$ exists, then for $\nu$-almost every $\xi\in Z$, $\mathscr Tu(\xi)=\widetilde{u}(\xi)$.
\end{thm}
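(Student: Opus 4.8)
The plan is to recall the definition of $\widetilde{u}$ from \cite{BBS} and then show that whenever the fibered limit $\mathscr{T}u(\xi)$ along geodesic rays exists, the averaged limit $\widetilde{u}(\xi)$ exists and equals it, for $\nu$-a.e.\ $\xi \in Z$. Recall that in \cite{BBS} the trace $\widetilde{u}(\xi)$ is defined (roughly) as $\lim_{r \to 0^+} \fint_{B(x_r,\,\cdot)} u$, an average of $u$ over balls in $X$ that shrink toward the boundary point $\xi$ along the uniformization, equivalently as a limit of averages over the vertices $v$ of $X$ lying "above" $\xi$ at combinatorial level $n$ as $n \to \infty$ — more precisely a Lebesgue-point-type limit built from the values of $u$ on an edge-neighborhood of the geodesic(s) from $v_0$ to $\xi$. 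The first step is to write this out precisely and to reduce the claim to a statement about such neighborhoods.

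The second step is the geometric heart: I would show that for $\nu$-a.e.\ $\xi \in Z$, every vertex of $X$ at combinatorial distance $n$ from $v_0$ that is "associated to $\xi$" (i.e.\ whose corresponding ball in $Z$ contains $\xi$, in the sense used to define $\widetilde u$) lies within bounded distance in $X$ of \emph{some} geodesic ray $[v_0,\xi)$, and conversely that all geodesic rays $[v_0,\xi)$ pass within bounded distance of all such vertices. This is where the construction of \cite{BBS} is used: the hyperbolic filling has uniformly bounded vertex degrees and the "shadow" balls at level $n$ have comparable radii $\sim \alpha^{-n}$, so the collection of level-$n$ vertices above $\xi$ has cardinality bounded independently of $n$ and they are all mutually within a fixed combinatorial distance. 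Consequently, if $u(x) \to L := \mathscr{T}u(\xi)$ as $x \to \xi$ along one geodesic ray, then $u(v) \to L$ along \emph{any} sequence of level-$n$ vertices above $\xi$; since $\widetilde u(\xi)$ is (up to an average over boundedly many such vertices and their adjacent edges) a limit of such quantities, we get $\widetilde u(\xi) = L$.

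The main obstacle I expect is the passage from "$u$ has a limit along geodesic rays" to "$u$ has a limit along the edge-neighborhoods used in the averaging defining $\widetilde u$": the definition of $\widetilde u$ involves genuine averages of $u$ over sets of positive $\mu$-measure (arcs/balls), not just values at vertices, whereas $\mathscr{T}u$ is a radial limit. To bridge this I would argue that the hypothesis "$\mathscr Tu(\xi)$ exists and is independent of the chosen geodesic ray" forces $u$ to converge to $L$ \emph{uniformly} over the boundedly-many-branch bundle of all geodesic rays to $\xi$ (a compactness argument: if not, one extracts two rays along which $u$ has different limit behavior, contradicting either existence or ray-independence), and then the averaging balls, being contained in a fixed combinatorial neighborhood of this bundle at each level, inherit the convergence. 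The remaining points — that the exceptional $\nu$-null set where $\mathscr Tu$ fails can be taken to contain the one where $\widetilde u$ might fail, and that the identification $\partial X \cong Z$ is measure-theoretically compatible — follow from the biLipschitz equivalence $\partial X \simeq Z$ (\cite[Proposition 4.4]{BBS}) and are routine. I would also double-check the edge case $u$ having no limit along some but not all rays: since existence of $\mathscr{T}u$ already builds in ray-independence, this does not actually occur on the relevant full-measure set.
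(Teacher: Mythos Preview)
Your core idea matches the paper's, but you have misidentified the definition of $\widetilde{u}$, and this creates a phantom obstacle. In the paper (restating \cite{BBS}), the trace is $\widetilde{u}(\xi)=\lim_{n\to\infty} u_n(\xi)$ with
\[
u_n(\xi) = \frac{1}{\#A_n(\xi)} \sum_{z \in A_n(\xi)} u((z,n)),
\]
a discrete average of the \emph{values of $u$ at the finitely many level-$n$ vertices above $\xi$}. There are no integral averages over balls or arcs of positive $\mu$-measure, so your ``main obstacle'' paragraph addresses a difficulty that does not arise. For a merely measurable $u$ (as in the statement), an integral-average definition would not even be well posed.

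Once this is corrected, your argument collapses to exactly the paper's proof, which is short: since $\#V_n(\xi) \le K$ for all $n$ (Proposition~\ref{prop-2.1}), the union $E(\xi)$ of all vertical edges joining $V_n(\xi)$ to $V_{n+1}(\xi)$ is covered by at most $K^2$ geodesic rays $J_1, \dots, J_\ell$ from $v_0$ to $\xi$. Convergence to $A := \mathscr{T}u(\xi)$ along each $J_i$ yields an $N_i$; with $N = \max_i N_i$ one has $|u(x) - A| < \varepsilon$ for every $x \in E(\xi)$ with $|x| > N$, hence for every vertex in $V_n(\xi)$ with $n > N$, so $|u_n(\xi) - A| < \varepsilon$. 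No compactness is needed, only a finite maximum.

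One further inaccuracy in your second step: you say the level-$n$ vertices above $\xi$ lie ``within bounded distance of some geodesic ray $[v_0,\xi)$''. In fact each such vertex lies \emph{on} some geodesic ray to $\xi$, since any $v \in V_n(\xi)$ and $w \in V_{n+1}(\xi)$ are neighbors (because $\xi \in B_v \cap B_w$). This matters: for a merely measurable $u$, proximity to a ray tells you nothing about the value of $u$; membership in the ray does.
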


In the setting of Bj\"{o}rn-Bj\"{o}rn-Shanmugalingam \cite{BBS}, elementary computations show that both the parameters $\mu(X)$ and  $\mathcal{R}_{p,\rho}$ are finite.
Then we see from Theorems \ref{thm-1.1} and \ref{thm-imply} that the trace function $\widetilde{u}$ in \cite{BBS} exists for every $u\in N^{1,p}(X)$. The following example demonstrates that the opposite implication in Theorem \ref{thm-imply} is invalid.

\begin{example}\label{exm-1.1}
Let $1\leq p<\infty$ and $\rho(t)=e^{-t^{2}-\epsilon pt}$. Then there exists a function $u\in N^{1,p}(X)$ such that the trace $\widetilde{u}$ exists, however, the trace
$\mathscr Tu$ does not exist.
\end{example}

Our work is highly inspired by the recent trace results on regular trees, hyperbolic fillings and metric measure spaces. In addition to the aforementioned references, we also refer the interested readers to \cite{BBGS, KW, KNW2, LLW, ML, WA,SS17} for more discussions in this direction.

The paper is organized as follows. In Section \ref{sec-2}, some necessary terminologies will be introduced and some auxiliary results will be proved. Theorem \ref{thm-1.1} will be shown in Section \ref{sec-3}, and Section \ref{sec-4} will be devoted to the proof of Theorem \ref{thm-imply} and the construction of Example \ref{exm-1.1}.

\section{Preliminaries}\label{sec-2}
Throughout this paper, the letter $C$ (sometimes with a subscript) will denote a positive constant that usually depend only on the given parameters of the spaces and may change at different occurrences; if $C$ depends on $a,$ $b,$ $\ldots$, then we write $C = C(a,b,\ldots).$ The notation $A\lesssim B$ (resp. $A \gtrsim B$) means that there is a constant $C\geq 1$ such that $A \leq C \cdot B$ (resp. $A \geq C \cdot B).$
If there is a constant $C\geq 1$ such that $C^{-1} \cdot A\leq B\leq C \cdot A$, then we write $A\approx B$, and $C$ is called a {\it comparison constant}.

For a metric space $(Z, d_{Z})$, the open ball $\mathbb{B}_{Z}(x, r)$ in $Z$ with center $x \in Z$ and radius $r>0$ is
$$\mathbb{B}_{Z}(x, r)=\{y\in Z: d_{Z}(y,x)<r\},$$
and for a constant $\lambda>0$, the notation $\lambda\mathbb{B}_{Z}(x, r)$ denotes the following open ball:
$$\lambda\mathbb{B}_{Z}(x, r)=\{y\in Z: d_{Z}(y,x)<\lambda r\}.$$
A metric space $(Z, d_{Z})$ is called {\it doubling} if there exists a constant $C\geq 1$ such that every ball in $Z$ with radius $r>0$ can be covered by at most $C$ balls with radii $\frac{1}{2}r$.

If $(Z, \mu)$ is a measure space, for every function $u\in L^1_{\rm loc}(X, \mu)$ and every measurable subset $A\subset X$, let $\vint_Aud\mu$ stand for the integral average $\frac{1}{\mu(A)}\int_Aud\mu$, i.e.,
$$\vint_Aud\mu=\frac{1}{\mu(A)}\int_Aud\mu.$$
The measure $\mu$ is said to be {\it doubling} if there exists a constant $C_\mu\geq1$ such that for all balls $B$ in $Z$,
\begin{equation}\label{eq-doub}
0<\mu(2B)\leq C_\mu\mu(B)<\infty.
\end{equation}

It is well known that a metric space equipped with a doubling measure is doubling (cf. \cite[Section 10.13]{H}).

We construct the hyperbolic filling as follows (see also \cite{BBS}):
Suppose that $(Z, d_Z)$ is a bounded metric space with $0<\text{diam}Z<1$. Let $\alpha,$ $\tau>1$, $z_0\in Z$ and $A_{0}=\{z_{0}\}$. Obviously, $\mathbb{B}_{Z}(z_{0},1)=Z$.
By Zorn$^{\prime}$s lemma or the Hausdorff maximality principle, for each positive integer $n$, we can choose a maximal $\alpha^{-n}$-separated set $A_{n}\subset Z$ such that $A_{n}\subset A_{m}$ provided $m> n\geq0$. Here a set $A\subset Z$ is called {\it $\alpha^{-n}$-separated} if for any distinct $z, z^{\prime}\in A_{n}$, $d_{Z}(z,z^{\prime})\geq\alpha^{-n}$. Then it follows that the balls $\mathbb{B}_{Z}(z,\frac{1}{2}\alpha^{-n})$ are pairwise disjoint for different elements $z$ in $A_{n}$. Since $A_{n}$ is maximal, we see that the balls $\mathbb{B}_{Z}(z,\alpha^{-n})$ for $z\in A_{n}$ cover $Z$. Here and from now on, $n$ and $m$ always denote nonnegative integers.

Next, we define the {\it vertex set}
$$
V=\bigcup_{n\geq0} V_{n},
$$ where $V_{0}=\{v_{0}\}$ with $v_{0}=(z_{0}, 0)$, and for any $n\geq 1$,
$\ V_{n}=\{(x, n):x\in A_{n}\}$.

By the correspondence between the points in $A_{n}$ and the points in $V_{n}$, we set $B_{v}:=\mathbb{B}_{Z}(x, \alpha^{-n})$ for $v=(x, n)\in V_{n}$.

Given two different vertices $(x, n), (y, m)$ (i.e., $x\not=y$ or $n\not= m$), we say that $(x, n)$ is a {\it neighbor} of $(y, m)$, denoted by $(x, n)\sim(y, m)$,
if and only if $|n-m|\leq1$ and
$$
\tau^{1-|n-m|} \mathbb{B}_{Z}(x,\alpha^{-n})\cap \tau^{1-|n-m|} \mathbb{B}_{Z}(y,\alpha^{-m})\neq\emptyset.
$$

Define the {\it hyperbolic filling} of $Z$, denoted by $X$, to be the graph formed by the vertex set $V$ together with the above neighbor relation (edges). In particular, we say that $v_{0}$ is the {\it root} of the hyperbolic filling $X$. Also, we say that $\alpha$  and $\tau$ are {\it construction parameters} of $X$.

We consider $X$ to be a metric graph where the edges are unit intervals.
The graph distance between two points $x, y \in X$, denoted by $|x-y|$, is the length of the shortest curve connecting them. Since $X$ is a metric graph, it is easy to see that these shortest curves exist, and thus, $X$ is geodesic. An edge $(x, n)\sim(y, m)$ is {\it horizontal} if $m=n$, and {\it vertical} if $m=n\pm1$. A geodesic is called {\it vertical} if it consists of vertical edges.

In general, each vertex has at least one neighbor. The following proposition shows that if
the metric space is doubling, then the degree is well-controlled. Here, the degree of a vertex is the
number of neighbors it has.

\begin{prop} $($cf. \cite[Proposition 4.5]{BBS}$)$\label{prop-2.1}
Assume that $(Z,d_Z)$ is a doubling metric space. Then the hyperbolic filling $X$ has uniformly bounded degree, that is, there exists a constant $K=C(\alpha, \tau, Z)\geq1$ such that for any $v\in V$,
$$
1\leq\#\{w: w\sim v\}\leq K,
$$
where $\#\{w: w\sim v\}$ is the cardinality of the set $\{w: w\sim v\}$.
\end{prop}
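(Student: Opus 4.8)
The plan is to fix an arbitrary vertex $v=(x,n)\in V_n$, note that its neighbors can only lie in $V_{n-1}\cup V_n\cup V_{n+1}$ (with $V_{-1}$ understood to be empty), and bound the number of neighbors contained in each single level $V_m$ by a constant $N=N(\alpha,\tau,Z)$; the proposition then follows with $K=3N$. The lower bound $\#\{w:w\sim v\}\ge 1$ is immediate: by the construction the balls $\mathbb{B}_Z(y,\alpha^{-(n-1)})$, $y\in A_{n-1}$, cover $Z$ when $n\ge 1$ (and the balls $\mathbb{B}_Z(y,\alpha^{-1})$, $y\in A_1$, cover $Z$ when $n=0$, using $\mathbb{B}_Z(z_0,1)=Z$), so one may pick $y$ with $d_Z(x,y)<\alpha^{-(n-1)}$ and check straight from the definition of $\sim$ that $v\sim(y,n-1)$ (respectively $v_0\sim(y,1)$), the point $x$ (resp. $y$) witnessing the required intersection.

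For the level-wise upper bound the key elementary estimate is the following: if $w=(y,m)\sim v=(x,n)$, then $|n-m|\le 1$ and, picking a point $z$ in the nonempty set $\tau^{1-|n-m|}\mathbb{B}_Z(x,\alpha^{-n})\cap\tau^{1-|n-m|}\mathbb{B}_Z(y,\alpha^{-m})$ and applying the triangle inequality,
\[
d_Z(x,y)<\tau^{1-|n-m|}(\alpha^{-n}+\alpha^{-m})\le 2\tau\,\alpha^{-\min\{n,m\}}\le 2\tau\alpha\cdot\alpha^{-m},
\]
since $\tau^{1-|n-m|}\le\tau$ for horizontal and vertical edges alike and $\min\{n,m\}\ge m-1$. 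Hence every center $y$ of a level-$m$ neighbor of $v$ lies in $\mathbb{B}_Z(x,2\tau\alpha\cdot\alpha^{-m})$. On the other hand, distinct such centers are distinct points of the $\alpha^{-m}$-separated set $A_m$, so the balls $\mathbb{B}_Z(y,\tfrac12\alpha^{-m})$ are pairwise disjoint, and each of them is contained in the single ball $\mathbb{B}_Z(x,(2\tau\alpha+\tfrac12)\alpha^{-m})$.

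It then remains to turn ``a family of pairwise disjoint balls of radius $\tfrac12\alpha^{-m}$ contained in a ball whose radius exceeds $\tfrac12\alpha^{-m}$ only by the factor $4\tau\alpha+1$'' into a bound on the cardinality of that family. Since $(Z,d_Z)$ is doubling with some constant $C_Z$, iterating its covering property $k:=\lceil\log_2(4\tau\alpha+1)\rceil$ times shows that the enclosing ball can be covered by at most $C_Z^{\,k}$ balls of radius at most $\tfrac12\alpha^{-m}$; as any ball of radius at most $\tfrac12\alpha^{-m}$ contains at most one point of an $\alpha^{-m}$-separated set, the number of neighbors of $v$ in $V_m$ is at most $N:=C_Z^{\,k}=C(\alpha,\tau,Z)$. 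Summing over the at most three admissible values of $m$ gives the bound with $K:=3N$.

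I do not anticipate a genuine obstacle here: this is a routine packing/covering argument, in the same spirit as the separation/covering estimates already used in the construction. The only point that asks for a little care is bookkeeping the radii across the three edge types — recalling that $\tau^{1-|n-m|}$ equals $\tau$ on horizontal edges and $1$ on vertical ones — so that, after all comparisons, the ratio between the separation radius $\tfrac12\alpha^{-m}$ and the radius of the enclosing ball depends only on the construction parameters $\alpha$ and $\tau$; once that is in place, the doubling hypothesis on $Z$ supplies the rest.
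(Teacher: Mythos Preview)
Your argument is correct and is precisely the standard packing/covering proof one expects here. Note, however, that the paper does not supply its own proof of this proposition: it is stated with a citation to \cite[Proposition~4.5]{BBS} and used as a black box, so there is no in-paper argument to compare against. Your write-up is essentially the proof one finds in that reference.
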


In the rest of this paper, we assume that $(Z, d_{Z}, \nu)$ is a compact metric space equipped with a doubling measure $\nu$ and $0<\text{diam}Z<1$, and that $X$ is a hyperbolic filling of $Z$, as constructed above, with construction parameters $\alpha, \tau$.

On $X$, we define a metric $d$  by setting
$$
ds=e^{-\epsilon|x|}d|x|,
$$
where $\epsilon=\log\alpha$, $|x|:=|x-v_{0}|$, and $d|x|$ denotes a measure which appoints the Lebesgue measure of every edge in $X$ to be $1$. Then for any pair of points $y$ and $z$ in $X$, the distance between them is
\begin{equation}\label{metric-def}
d(y,z)=\inf_{\gamma}\int_{\gamma}ds=\inf_{\gamma}\int_{\gamma}e^{-\epsilon|x|}d|x|,
\end{equation}
where the infimum is taken over all curves in $X$ joining $y$ and $z$.

We lift up the boundary measure $\nu$ on $Z$ to a measure $\mu$ on $X$ as follows. For any $x\in X$, there is either a vertex $v\in V$ such that $x=v$ or an edge $[v_1, v_2]$ such that $x\in [v_1, v_2]\backslash \{v_1, v_2\}$. Define
\begin{equation}\label{-measure}
d\mu(x)=
\left\{\begin{array}{cl}
2\rho(|x|)\nu(B_{v})d|x|, &\text{if } x=v, \\
\rho(|x|)\big(\nu(B_{v_1})+\nu(B_{v_2})\big)d|x| , &\text{if } x\in [v_1, v_2]\backslash \{v_1, v_2\},
\end{array}\right.
 \end{equation}
 where    $\rho:[0,\infty)\rightarrow(0,\infty)$ is a Borel function satisfying $\rho\in L_{\text{loc}}^{1}([0,\infty))$.
For any $x\in X$, let $v_x$ be a vertex in $X$ such that $$|x-v_x|=\min_{v\in V}|x-v|.$$
It is possible that there is some $x\in X$ such that the vertex $v_x$ may be not unique. Since $\nu$ is doubling,
\begin{equation}\label{eq-add-zw}
d\mu(x)\approx \rho(|x|)\nu(B_{v_x})d|x|
\end{equation}
with a comparison constant depending on $C_\nu, \alpha, \tau$, where $C_\nu\geq1$ is the doubling constant satisfying \eqref{eq-doub}. Also, we have
\begin{equation}\label{ds-du}
ds\approx\frac{e^{-\epsilon |x|}}{\rho(|x|)\nu(B_{v_x})}d\mu
\end{equation}
with a comparison constant depending on $C_\nu, \alpha, \tau$.

Given a boundary point $\xi\in Z$, let $$
A_{n}(\xi)=A_{n}\cap \mathbb{B}_{Z}(\xi, \alpha^{-n}) \ \ \ \ \text{and} \ \ \ V_{n}(\xi)=\{(x,n)\in V_{n}:x\in A_{n}(\xi)\}.
$$

Obviously, $A_n(\xi)\not=\emptyset$, and for any nonnegative integer $n$, $v_{n+1}(\xi)\sim v_{n}(\xi)$ since $\xi\in B_{v_{n}(\xi)}\cap B_{v_{n+1}(\xi)}$.

\begin{defn}[Geodesic ray]\label{geodesic-ray}
For any $\xi\in Z$, we say that $[v_0, \xi)$ is a geodesic ray from $v_0$ to  $\xi$ if
$$[v_{0},\xi):=\bigcup_{n\geq0}[v_{n}(\xi),v_{n+1}(\xi)],$$
where $v_{n}(\xi)\in V_{n}(\xi)$ and $v_{n+1}(\xi)\in V_{n+1}(\xi)$.
\end{defn}

Note that for any $\xi\in Z$, the geodesic ray from   the root  $v_0$ to  $\xi$ may be not unique since the vertex set $V_n(\xi)$ may have more than one elements.

\begin{defn}
Let $\Gamma$ be a family of nonconstant rectifiable curves in $X$ and $F(\Gamma)$ the family of all Borel measurable functions $\varrho: X\rightarrow[0,\infty]$ such that for every $\gamma\in\Gamma$,
$$
\int_{\gamma}\varrho ds\geq 1.
$$
For $1\leq p<\infty$, we define the $p$-modulus of the family $\Gamma$ as
$$
\text{Mod}_{p}(\Gamma)=\inf_{\varrho\in F(\Gamma)}\int_{X}\varrho^{p}d\mu.
$$
\end{defn}

The following result is an analogue of \cite[Proposition 2.2]{KNW} for hyperbolic fillings.

\begin{prop}\label{Prop-2.3}
Let $1\leq p<\infty$, and let $\rho:[0,\infty)\rightarrow(0,\infty)$ is a locally integrable Borel function. Then the following are equivalent.

$(1)$ The function $e^{-\epsilon t}\rho(t)^{-\frac{1}{p}}$ belongs to $L^{p^{\prime}}_{\text{loc}}([0,\infty))$, where $p'=\frac{p}{p-1}$ if $p>1$, and $p'=\infty$ if $p=1$.

$(2)$ For every rectifiable curve $\gamma$ in $X$, $\text{Mod}_{p}(\{\gamma\})>0$.
\end{prop}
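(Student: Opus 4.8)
The plan is to establish the equivalence $(1)\Leftrightarrow(2)$ by a direct modulus estimate along a single curve, exploiting the explicit formulas \eqref{metric-def} for $ds$ and \eqref{-measure} for $d\mu$, and the fact that $X$ has uniformly bounded degree (Proposition \ref{prop-2.1}). The key observation is that any rectifiable curve $\gamma$ in $X$ meets only finitely many edges, and along each edge the quantities $|x|$, $\nu(B_{v_x})$, $\rho(|x|)$, $e^{-\epsilon|x|}$ are all comparable to their values at the endpoints (for $|x|$ and $e^{-\epsilon|x|}$ this is obvious; for $\rho$ it is encoded in the local integrability hypotheses). So up to comparison constants, $\mathrm{Mod}_p(\{\gamma\})$ can be computed edge by edge, and the whole statement reduces to a one-dimensional computation on a single interval.

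First I would reduce $(2)$ to a finite statement: since $\mathrm{Mod}_p$ of a curve family is subadditive and monotone, and since a rectifiable $\gamma$ is covered by finitely many edges $e_1,\dots,e_N$, we have $\mathrm{Mod}_p(\{\gamma\})>0$ if and only if $\mathrm{Mod}_p(\{\gamma_j\})>0$ for the restriction $\gamma_j$ of $\gamma$ to some $e_j$ (indeed $\mathrm{Mod}_p(\{\gamma\})\le \min_j \mathrm{Mod}_p(\{\gamma_j\})$ and, conversely, if $\varrho$ is admissible for some $\gamma_j$ one can build an admissible function for $\gamma$ by comparison; more cleanly, $1\le\int_\gamma\varrho\,ds$ forces $\int_{e_j}\varrho\,ds\ge 1/N$ for some $j$). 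Then it suffices to treat a single edge $e=[v_1,v_2]$, parametrized by arc length $t$ of the graph metric $d|x|$, i.e. $t\in[n,n+1]$ where $n=|v_1|$. On $e$ we have $ds = e^{-\epsilon t}\,dt$ and, by \eqref{-measure} and doubling of $\nu$, $d\mu \approx \rho(t)\,\nu(B_{v_e})\,dt$ where $\nu(B_{v_e})>0$ is a fixed positive number. Because $n\le t\le n+1$, $e^{-\epsilon t}\approx e^{-\epsilon n}$ with absolute comparison constants, so admissibility $\int_n^{n+1}\varrho(t)e^{-\epsilon t}\,dt\ge 1$ is equivalent, up to constants, to $\int_n^{n+1}\varrho(t)\,dt\ge c$.

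The heart of the argument is then the classical duality computation for the $p$-modulus of a single segment with a weight. For $p>1$: by Hölder's inequality applied with the weight $\rho$,
\begin{equation*}
c \le \int_n^{n+1}\varrho(t)\,dt \le \Big(\int_n^{n+1}\varrho(t)^p\rho(t)\,dt\Big)^{1/p}\Big(\int_n^{n+1}\rho(t)^{-1/(p-1)}\,dt\Big)^{1/p'},
\end{equation*}
which shows $\int_X \varrho^p\,d\mu \gtrsim \nu(B_{v_e}) \big(\int_n^{n+1}\rho^{-1/(p-1)}\,dt\big)^{-(p-1)}$; when $\rho^{-1/(p-1)}=\big(e^{-\epsilon t}\rho(t)^{-1/p}\big)^{p'}e^{\epsilon p' t}$ is in $L^1_{\mathrm{loc}}$ — which is exactly condition $(1)$ rephrased using $e^{-\epsilon t}\approx e^{-\epsilon n}$ — this lower bound is strictly positive, giving $(1)\Rightarrow(2)$. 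Conversely, if $(1)$ fails on some interval, the extremal density $\varrho = \rho^{-1/(p-1)}\big/\int_n^{n+1}\rho^{-1/(p-1)}$ (suitably truncated and then the truncation removed) is admissible with arbitrarily small $p$-energy, forcing $\mathrm{Mod}_p(\{\gamma\})=0$; hence $(2)\Rightarrow(1)$. The case $p=1$ is analogous but simpler: admissibility forces $\varrho\ge$ a positive constant on a set where $e^{-\epsilon t}$ is bounded above, so $\int_X\varrho\,d\mu\gtrsim \nu(B_{v_e})\operatorname{ess\,inf}_{[n,n+1]}\rho = \nu(B_{v_e})\,\|e^{-\epsilon t}\rho(t)^{-1}\|_{L^\infty}^{-1}$ up to constants, which is positive exactly when $(1)$ holds, and the converse again uses a concentrated density.

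The main obstacle I anticipate is purely bookkeeping rather than conceptual: carefully reducing from an arbitrary rectifiable curve (which may traverse an edge multiple times, turn around, or be only a subarc of an edge) to the single-edge model, and making sure the comparison constants in \eqref{eq-add-zw} and the elementary bound $e^{-\epsilon n}\le e^{-\epsilon t}\le e^{-\epsilon(n+1)}$ are tracked correctly so that the hypothesis $(1)$ — stated with the weight $e^{-\epsilon t}\rho(t)^{-1/p}$ — matches the bare weight $\rho^{-1/(p-1)}$ that appears naturally in the Hölder duality. Neither of these is deep: the degree bound from Proposition \ref{prop-2.1} guarantees finitely many edges near any compact piece of $\gamma$, and the exponential factor is harmless on any bounded range of $|x|$. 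Once the reduction is in place, the modulus-of-a-segment computation is entirely standard.
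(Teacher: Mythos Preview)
Your strategy is correct and shares the Hölder-duality core with the paper, but the organization differs. For $(1)\Rightarrow(2)$ the paper does not reduce to a single edge: it first passes to a subcurve $\gamma'\subset\{|x|\le N\}$ with $\int_{\gamma'}\phi\,ds\ge\frac12$, then applies Hölder on all of $\gamma'$ using \eqref{ds-du}, and bounds the resulting weight integral by the integral over $\{|x|\le N\}$ via the degree bound of Proposition~\ref{prop-2.1}. Your edge-by-edge reduction also works, but be careful: a compact rectifiable curve can oscillate across a vertex infinitely often, so although its image meets only finitely many edges it need not be a \emph{finite concatenation} of edge-arcs; the cleanest fix is to run Hölder globally on $\gamma'$ as the paper does, or first pass to an injective subarc. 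Your $p=1$ sketch also needs a small correction: admissibility does not force $\varrho$ to be pointwise large; what you actually use is $1\le\int_\gamma\varrho\,ds\lesssim\|e^{-\epsilon t}\rho^{-1}\|_{L^\infty([n,n+1])}\int_\gamma\varrho\,d\mu$.

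For $(2)\Rightarrow(1)$ your truncated-extremal argument is more direct than the paper's. The paper instead chooses a vertical geodesic over an interval $[a,b]$ where $\|e^{-\epsilon t}\rho^{-1/p}\|_{L^{p'}}=\infty$, decomposes $[a,b]$ into Borel sets $I_k$ (or level sets $G_{k_n}$ when $p=1$) on which the weight integral exceeds $2^k$, sums the normalized pieces to obtain a single $\phi\in L^p(X,\mu)$ with $\int_\gamma\phi\,ds=\infty$, and then invokes \cite[Theorem~5.5]{HAJ} to conclude $\mathrm{Mod}_p(\{\gamma\})=0$. Your approach avoids the external citation at the cost of a limiting argument; both are standard.
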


\begin{proof}
$(1)\Rightarrow(2)$. Assume that $\gamma$ is a rectifiable curve in $X$. For any Borel measurable function $\phi\in F(\{\gamma\})$, we have $\int_{\gamma}\phi ds\geq1$. By the monotone convergence theorem, we may assume that $\int_{\gamma^{\prime}}\phi ds\geq\frac{1}{2}$ for a subcurve $\gamma^{\prime}$ of $\gamma$ with $\gamma^{\prime}\subset\{x\in X:|x|\leq N\}$ for some $N\in \mathbb{N}$.

Since $\nu$ is doubling and $Z$ is bounded, there exists a constant $C(C_\nu, N, \alpha)>0$ such that for all $x\in X$ with $|x|\leq N$,
$$
0<C(C_\nu, N, \alpha)\nu(Z)\leq \nu(B_{v_x})\leq \nu(Z)< \infty,
$$ where $C_\nu$ is from \eqref{eq-doub}.

For $p>1$, it follows from the H\"{o}lder inequality, \eqref{eq-add-zw} and \eqref{ds-du}
that
\begin{align*}
\int_{\gamma^{\prime}}\phi ds
\leqslant& C(C_\nu, \alpha, \tau)\int_{\gamma^{\prime}}\phi(x)\frac{e^{-\epsilon |x|}}{\rho(|x|)\nu(B_{v_x})}d\mu(x)\\
\leqslant&C(C_\nu, \alpha, \tau)\bigg(\int_{\gamma^{\prime}} \frac{e^{-\frac{\epsilon p|x|}{p-1}}}{\rho(|x|)^{\frac{p}{p-1}}\nu(B_{v_x})^{\frac{p}{p-1}}} d\mu(x)\bigg)^{\frac{p-1}{p}}\bigg(\int_{\gamma^{\prime}}\phi(x)^{p} d\mu(x)\bigg)^{\frac{1}{p}}\\
\leqslant&C(C_\nu, N, \alpha, \tau)\bigg(\int_{\gamma^{\prime}}e^{-\frac{\epsilon p|x|}{p-1}}\rho(|x|)^{-\frac{1}{p-1}}d|x|\bigg)^{\frac{p-1}{p}}\bigg(\int_{\gamma^{\prime}}\phi(x)^{p} d\mu(x)\bigg)^{\frac{1}{p}}.
\end{align*}

Since $V_0=\{v_0\}$, we know from Proposition \ref{prop-2.1} that for all $n\geq 0$, $$\#V_n\leq K^{n},$$ which, together with the assumption $e^{-\epsilon t}\rho(t)^{-\frac{1}{p}}\in L^{p^{\prime}}_{\text{loc}}([0,\infty))$, gives
\begin{align*}
\int_{\gamma^{\prime}} e^{-\frac{\epsilon p|x|}{p-1}}\rho(|x|)^{-\frac{1}{p-1}} d|x|
\leqslant& \int_{\{x\in X:|x|\leq N\}} e^{-\frac{\epsilon p|x|}{p-1}}\rho(|x|)^{-\frac{1}{p-1}} d|x|\\
\leqslant& K^{2N}\int_{0}^{N}e^{-\frac{\epsilon p}{p-1}t}\rho(t)^{-\frac{1}{p-1}}dt<\infty.
\end{align*}
Substituting into the above formula, we obtain that
$$
\int_{\gamma^{\prime}}\phi ds
\leqslant C(C_\nu, N, \alpha, \tau, p, \epsilon, \rho, K)\bigg(\int_{\gamma^{\prime}}\phi(x)^{p} d\mu(x)\bigg)^{\frac{1}{p}}.
$$
Then we get
\begin{align*}
\int_{X}\phi^{p} d\mu&\geq\int_{\gamma^{\prime}}\phi^{p} d\mu\geq C(C_\nu, N, \alpha, \tau, p, \epsilon, \rho, K)\bigg(\int_{\gamma^{\prime}}\phi ds\bigg)^{p}\\
&\geq\frac{1}{2^{p}}C(C_\nu, N, \alpha, \tau, p, \epsilon, \rho, K)>0.
\end{align*}

For $p=1$, it follows from \eqref{ds-du} and the assumption $e^{-\epsilon t}\rho(t)^{-1}\in L^{\infty}_{\text{loc}}([0,\infty))$ that
$$
\int_{\gamma^{\prime}}\phi ds\leq C(C_\nu, N, \alpha, \tau)\|e^{-\epsilon t}\rho(t)^{-1}\|_{L^{\infty}_{\text{loc}}([0,\infty))}\int_{\gamma^{\prime}}\phi d\mu.
$$
Hence we have
$$
\int_{X}\phi d\mu\geq\int_{\gamma^{\prime}}\phi d\mu\geq C(C_\nu, N, \alpha, \tau, \epsilon, \rho)\int_{\gamma^{\prime}}\phi ds\geq\frac{1}{2}C(C_\nu, N, \alpha, \tau, \epsilon, \rho)>0.
$$
These ensure that Mod$_{p}(\{\gamma\})>0$ for every rectifiable curve $\gamma$, which completes the check of the implication $(1)\Rightarrow(2)$.

$(2)\Rightarrow(1)$. We prove this implication by contradiction. Assume that for every rectifiable curve $\gamma$ in $X$, $\text{Mod}_{p}(\{\gamma\})>0$, but $e^{-\epsilon t}\rho(t)^{-\frac{1}{p}}\notin L^{p^{\prime}}_{\text{loc}}([0,\infty))$. To reach a contradiction, it suffices to find a rectifiable curve $\gamma$ such that $\text{Mod}_{p}(\{\gamma\})=0$. By the assumption, there exist two constants $0\leq a<b<\infty$ such that
$$
\|e^{-\epsilon t}\rho(t)^{-\frac{1}{p}}\|_{L^{p^{\prime}}([a,b])}=\infty.
$$

Let $x_{a}, y_{b}\in X$ be such that $|x_{a}|=a$, $|y_{b}|=b$. Let $\gamma$ denote a vertical geodesic joining $x_{a}$ and $y_{b}$ in $X$. Then we claim that $${\rm Mod}_{p}(\{\gamma\})=0.$$
To prove this claim, by \cite[Theorem 5.5]{HAJ}, we only need to find a Borel measurable function $0\leq\phi\in L^{p}(X, \mu)$ such that
$$
\int_{\gamma}\phi ds=\infty.
$$

For $p>1$, we have
$$
\int_{a}^{b}e^{-\frac{\epsilon p}{p-1}t}\rho(t)^{-\frac{1}{p-1}}dt=\infty.
$$
Since both $e^{-\epsilon t}$ and $\rho(t)$ are Borel measurable functions, there is a sequence of pairwise disjoint Borel subsets $\{I_{k}:I_{k}\subset[a, b]\}_{k=1}^{\infty}$ such that
$$
2^{k}<\int_{I_{k}}e^{-\frac{\epsilon p}{p-1}t}\rho(t)^{\frac{1}{1-p}}dt<\infty.
$$
We define a function $\phi:X\rightarrow\mathbb{R}$ by setting
$$
\phi(x)=h(|x|)\chi_{\gamma}(x),
$$
where $\chi_{\gamma}$ denotes the characteristic function of $\gamma$, and
$$
h(t)=\sum_{k=1}^{\infty}\frac{e^{-\frac{\epsilon t}{p-1}}\rho(t)^{\frac{1}{1-p}}}{\int_{I_{k}}e^{-\frac{\epsilon p}{p-1}t}\rho(t)^{\frac{1}{1-p}}dt}\chi_{I_{k}}(t).
$$
Since $I_{k}$ is Borel for each $k$, we know that $\phi$ is a Borel measurable function, and thus, the pairwise disjoint property of $\{I_{k}\}$ implies
\begin{align*}
\int_{X}\phi^{p}d\mu=&\int_{\gamma}h(|x|)^{p}\rho(|x|)\nu(B_{v_x})d|x|\leqslant\nu(Z)\int_{a}^{b}h(t)^{p}\rho(t)dt\\
\lesssim &\sum_{k=1}^{\infty}\int_{I_{k}}\frac{e^{-\frac{\epsilon p}{p-1}t}\rho(t)^{\frac{1}{1-p}}}{\big(\int_{I_{k}}e^{-\frac{\epsilon p}{p-1}t}\rho(t)^{\frac{1}{1-p}}dt\big)^{p}}dt\\
< & \sum_{k=1}^{\infty}2^{-k(p-1)}<\infty.
\end{align*} This guarantees that $\phi\in L^{p}(X, \mu)$.
Also, we can get
$$
\int_{\gamma}\phi ds=\int_{a}^{b}h(t)e^{-\epsilon t}dt=\sum_{k=1}^{\infty}1=\infty.
$$

For the remaining case, that is, $p=1$, by assumptions, we have
$$
\|e^{-\epsilon t}\rho(t)^{-1}\|_{L^{\infty}([a, b])}=\infty.
$$
There exists an infinite sequence $\{G_{k_{n}}\}_{k_{n}\in\mathbb{N}}$ defined by
$$
G_{k_{n}}=\big\{t\in[a, b]: 2^{k_{n+1}}\geq e^{-\epsilon t}\rho(t)^{-1}>2^{k_{n}}\big\}
$$
such that
$
m(G_{k_{n}})>0
$
for each $k_{n}\in\mathbb{N}$, where $m(\cdot)$ denotes the $1$-dimension Lebesgue measure.
We define a function $\phi:X\rightarrow\mathbb{R}$ by setting
$$
\phi(x)=\widehat{h}(|x|)\chi_{\gamma}(x),
$$
where
$$
\widehat{h}(t)=\sum_{k_{n}\in\mathbb{N}}\frac{1}{\int_{G_{k_{n}}}e^{-\epsilon t}dt}\chi_{G_{k_{n}}}(t).
$$
Since the subsets $\{G_{k_{n}}\}_{k_{n}\in\mathbb{N}}$ are Borel and pairwise disjoint, we see that $\phi$ is
Borel measurable. Thus we have
\begin{align*}
\int_{X}\phi d\mu\leqslant&\nu(Z)\int_{a}^{b}\widehat{h}(t)\rho(t)dt\lesssim\sum_{k_{n}\in\mathbb{N}}\int_{G_{k_{n}}}\frac{\rho(t)}{\int_{G_{k_{n}}}e^{-\epsilon t}dt}dt\\
<&\sum_{k_{n}\in\mathbb{N}}\int_{G_{k_{n}}}\frac{2^{-k_{n}}e^{-\epsilon t}}{\int_{G_{k_{n}}}e^{-\epsilon t}dt}dt\\
= & \sum_{k_{n}\in\mathbb{N}}2^{-k_{n}}<\infty
\end{align*}
and
$$
\int_{\gamma}\phi ds=\int_{a}^{b}\widehat{h}(t)e^{-\epsilon t}dt=\sum_{k_{n}\in\mathbb{N}}1=\infty.
$$
We conclude from the discussions as above that for every $1\leq p<\infty$, there is a function $\phi$ satisfying our requirement. This proves the claim, and hence, the proof of the implication $(2)\Rightarrow(1)$ is complete.
\end{proof}

Let $u\in L_{\text{loc}}^{1}(X, \mu)$.
A Borel function $g: X\rightarrow[0, \infty]$ is an {\it upper gradient} of $u$ if for each nonconstant compact rectifiable curve $\gamma$ in $X$, we have
\begin{equation}\label{gu-up}
|u(x)-u(y)|\leq\int_{\gamma}gds,
\end{equation}
where $x$ and $y$ denote the endpoints of $\gamma$.  The above inequality should be interpreted as also requiring that $\int_{\gamma}gds=\infty$ if either $u(x)$ or $u(y)$ is not finite. Obviously, $g=\infty$ is an upper gradient for each $u\in L_{\text{loc}}^{1}(X, \mu)$.

If \eqref{gu-up} holds for $p$-almost every curve, then $g$ is called a {\it $p$-weak upper gradient} of $u$.
Here, we say that a property holds for $p$-almost every curve $\gamma$ in $X$ if the family $\Gamma$ of all nonconstant compact rectifiable curves for which the property fails has zero $p$-modulus. The $p$-weak upper gradients were introduced by Koskela and MacManus in \cite{KMA}.

If $u$ has an upper gradient in $L^{p}(X, \mu)$, then it has a {\it minimal $p$-weak upper gradient}, denoted by $g_u$, in the sense that $g_{u}\leq g$ a.e. (i.e., almost everywhere) for every $p$-weak upper gradient $g\in L^{p}(X, \mu)$ of $u$. Moreover, $g_u$ is unique up to sets of measure zero. See \cite[Theorem 2.5]{BB} or \cite[Theorem 7.16]{HAJ} for these discussions.
We also refer interested readers to \cite{HK, BBS, NS, NS00} for more discussions on upper gradients and $p$-weak upper gradients.

\begin{defn}\label{def-2.5}
For $1\leq p<\infty$, we define the {\it Sobolev space} $N^{1,p}(X)$ as the normed space of all $u\in L^{p}(X, \mu)$ such that
$$
\|u\|_{N^{1,p}(X)}=\|u\|_{L^{p}(X, \mu)}+\inf_{g}\|g\|_{L^{p}(X, \mu)}<\infty,
$$
where the infimum is taken over all upper gradients $g$ of $u$, or equivalently all $p$-weak upper gradients $g$ of $u$.
\end{defn}

By Proposition \ref{Prop-2.3}, together with \cite[Definition 7.2 and Lemma 7.6]{HAJ}, we see that
any function $u\in L_{\text{loc}}^{1}(X, \mu)$ with an upper gradient $0\leq g\in L^{p}(X, \mu)$ is locally absolutely continuous.
 This fact implies that all such functions $u$ are absolutely continuous on each edge in hyperbolic fillings.

\begin{defn}\label{def-2.6}
For $1\leq p<\infty$, we define the {\it homogeneous Sobolev space} $\dot N^{1,p}(X)$ as the collection of all continuous functions $u$ which satisfy that each $u$ has an upper gradient $0\leq g\in L^{p}(X, \mu)$ and its homogeneous $\dot N^{1,p}$-norm $\|u\|_{\dot N^{1,p}(X)}$ is finite, where
$$
\|u\|_{\dot N^{1,p}(X)}=|u(v_{0})|+\inf_{g}\|g\|_{L^{p}(X, \mu)},
$$
and the infimum is taken over all upper gradients $g$ of $u$, or equivalently all $p$-weak upper gradients $g$ of $u$.
\end{defn}

\begin{defn}\label{def-2.7}
For $1\leq p<\infty$, the $p$-capacity of a set $E\subset X$ is the number
$$
C_{p}(E)=\inf_{u}\|u\|_{N^{1,p}(X)}^{p},
$$
where the infimum is taken over all $u\in N^{1,p}(X)$ such that $u\geq1$ on $E$.
\end{defn}

\begin{rem}\label{rem-2.1}
From Proposition \ref{Prop-2.3}, it is easy to see that if the function $e^{-\epsilon t}\rho(t)^{-1/p}$ belongs to $L^{p^{\prime}}_{\text{loc}}([0,\infty))$, then $\text{Mod}_{p}\Gamma_{x}>0$ for any point $x\in X$. Here the rectifiable curve family $\Gamma_{x}=\{\gamma: x\in\gamma\}$. Hence, by \cite[Proposition 1.48]{BB}, each point in $X$ has a positive $p$-capacity.
\end{rem}

The following is an analog of \cite[Lemma 3.1]{KNW} in the case of hyperbolic fillings.

\begin{lem}\label{lem-2.8}
Let $1\leq p<\infty$, and let $\rho:[0,\infty)\rightarrow(0,\infty)$ be a locally integrable Borel function. Suppose that $\varphi$ is a nonnegative function defined on $[0,\infty)$. Then
\begin{equation*}
\int_{Z}\int_{[v_{0},\xi)}\frac{\varphi(|x|)^{p}}{\nu(B_{v_x})}d\mu(x)d\nu(\xi)\approx\int_{X}\varphi(|x|)^{p}d\mu(x),
\end{equation*}
and for any $g\in L^{p}(X, \mu)$,
\begin{equation}\label{2.8-2}
\int_{Z}\int_{[v_{0},\xi)}\frac{|g(x)|^{p}}{\nu(B_{v_x})}d\mu(x)d\nu(\xi)\lesssim\int_{X}|g(x)|^{p}d\mu(x).
\end{equation}
Here, we recall that $[v_{0},\xi)$ denotes a geodesic ray from $v_{0}$ to $\xi$.
\end{lem}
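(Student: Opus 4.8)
\medskip
\noindent\textbf{Proof plan.}
The idea is to show that both sides of the first equivalence are comparable to the single $\xi$-independent quantity $\nu(Z)\int_{0}^{\infty}\varphi(t)^{p}\rho(t)\,dt$, and then to deduce \eqref{2.8-2} by exchanging the order of integration, with $\nu(B_{v_x})$ playing the role of the $\nu$-measure of the set of boundary points whose geodesic ray passes through $x$.

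First I would reduce the inner integral over a geodesic ray to a one-dimensional integral. Fix $\xi\in Z$ and any geodesic ray $[v_{0},\xi)=\bigcup_{n\ge0}[v_{n}(\xi),v_{n+1}(\xi)]$. Since each edge $[v_{n}(\xi),v_{n+1}(\xi)]$ joins level $n$ to level $n+1$, the function $x\mapsto|x|$ is increasing along the ray and is exactly its arc-length parameter, so $\int_{[v_{0},\xi)}f(|x|)\,d|x|=\int_{0}^{\infty}f(t)\,dt$ for every nonnegative $f$. Applying \eqref{eq-add-zw} edge by edge along the ray then gives
\[
\int_{[v_{0},\xi)}\frac{\varphi(|x|)^{p}}{\nu(B_{v_x})}\,d\mu(x)\approx\int_{[v_{0},\xi)}\varphi(|x|)^{p}\rho(|x|)\,d|x|=\int_{0}^{\infty}\varphi(t)^{p}\rho(t)\,dt,
\]
with comparison constants independent of $\xi$ and of the chosen ray; integrating in $\xi$ shows that the left-hand side of the first assertion is $\approx\nu(Z)\int_{0}^{\infty}\varphi(t)^{p}\rho(t)\,dt$.

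For the right-hand side I would decompose $X$ into its vertical and horizontal edges and use \eqref{eq-add-zw} to write $\int_{X}\varphi(|x|)^{p}\,d\mu(x)\approx\sum_{e}\int_{e}\varphi(|x|)^{p}\rho(|x|)\nu(B_{v_x})\,d|x|$. Along a vertical edge from level $n$ to level $n+1$ the parameter $|x|$ covers $[n,n+1]$ once, and along a horizontal edge at level $n$ it covers $[n,n+\tfrac{1}{2}]$ twice. Grouping the edges by level and invoking (i) the packing estimate $\sum_{v\in V_{n}}\nu(B_{v})\approx\nu(Z)$ — which holds since the balls $\{\tfrac{1}{2}B_{v}\}_{v\in V_{n}}$ are pairwise disjoint while the balls $\{B_{v}\}_{v\in V_{n}}$ cover $Z$, together with the doubling of $\nu$ — and (ii) the degree bound $K$ of Proposition \ref{prop-2.1} together with the elementary fact that every vertex is joined to at least one vertex one level deeper and every non-root vertex to at least one vertex one level shallower, one finds that the vertical edges from level $n$ contribute $\approx\nu(Z)\int_{n}^{n+1}\varphi(t)^{p}\rho(t)\,dt$ while the horizontal edges at level $n$ contribute between $0$ and $C\nu(Z)\int_{n}^{n+1}\varphi(t)^{p}\rho(t)\,dt$. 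Summing over $n$ gives $\int_{X}\varphi(|x|)^{p}\,d\mu\approx\nu(Z)\int_{0}^{\infty}\varphi(t)^{p}\rho(t)\,dt$, and combining this with the previous paragraph proves the first assertion.

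Finally, for \eqref{2.8-2} the reduction to a radial integral is no longer available, so I would argue by domination. For $\xi\in Z$ and any geodesic ray $[v_{0},\xi)$, its edge joining levels $n$ and $n+1$ is one of the vertical edges $e=[v_{1},v_{2}]$ with $\xi\in B_{v_{1}}\cap B_{v_{2}}$, since $v_{n}(\xi)\in V_{n}(\xi)$ and $v_{n+1}(\xi)\in V_{n+1}(\xi)$ force $\xi\in B_{v_{n}(\xi)}\cap B_{v_{n+1}(\xi)}$. Hence
\[
\int_{[v_{0},\xi)}\frac{|g(x)|^{p}}{\nu(B_{v_x})}\,d\mu(x)\le\sum_{n\ge0}\ \sum_{\substack{e=[v_{1},v_{2}]\text{ vertical}\\ \text{from level }n\text{ to }n+1}}\chi_{B_{v_{1}}\cap B_{v_{2}}}(\xi)\int_{e}\frac{|g(x)|^{p}}{\nu(B_{v_x})}\,d\mu(x),
\]
and since the right-hand side is a sum of nonnegative $\nu$-measurable functions of $\xi$, Tonelli's theorem (no measurable selection of rays being needed) yields
\[
\int_{Z}\int_{[v_{0},\xi)}\frac{|g(x)|^{p}}{\nu(B_{v_x})}\,d\mu(x)\,d\nu(\xi)\le\sum_{e=[v_{1},v_{2}]\text{ vertical}}\nu(B_{v_{1}}\cap B_{v_{2}})\int_{e}\frac{|g(x)|^{p}}{\nu(B_{v_x})}\,d\mu(x).
\]
For $x$ in a vertical edge $e=[v_{1},v_{2}]$ one has $v_{x}\in\{v_{1},v_{2}\}$, so $\nu(B_{v_{1}}\cap B_{v_{2}})\le\nu(B_{v_x})$ and each summand is at most $\int_{e}|g(x)|^{p}\,d\mu(x)$; as distinct edges overlap in a $\mu$-null set, the right-hand side is $\le\int_{X}|g(x)|^{p}\,d\mu(x)$, which is \eqref{2.8-2}. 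The only genuinely delicate point throughout is the non-uniqueness of geodesic rays, and it is precisely the passage in this last step from the ray's actual edge between levels $n$ and $n+1$ to the full sum over vertical edges $[v_{1},v_{2}]$ with $\xi\in B_{v_{1}}\cap B_{v_{2}}$ that disposes of it and legitimizes the interchange of integrations; I expect this to be the main thing to get right.
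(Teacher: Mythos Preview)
Your proof is correct but organised differently from the paper's. The paper proves the first equivalence by applying Fubini--Tonelli to swap the order of integration, which produces a kernel
\[
Q(x)=\int_{Z}\sum_{n\ge0}\sum_{v\in V_{n}(\xi)}\sum_{V_{n+1}(\xi)\ni w\sim v}\frac{\chi_{[v,w]}(x)}{\nu(B_{v_x})}\,d\nu(\xi)
\]
on the vertical skeleton $X_{V}$; both the upper and lower bounds then reduce to showing $Q(x)\approx 1$, using doubling and the special ``child'' edges $[v,v_d]$ respectively, together with $\int_{X}\varphi(|x|)^{p}\,d\mu\approx\int_{X_{V}}\varphi(|x|)^{p}\,d\mu$ from the degree bound. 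You instead show that each side is $\approx\nu(Z)\int_{0}^{\infty}\varphi(t)^{p}\rho(t)\,dt$: on the left by the direct cancellation of $\nu(B_{v_x})$ between $d\mu$ and the denominator along a ray (which the paper never isolates), and on the right via the packing estimate $\sum_{v\in V_{n}}\nu(B_{v})\approx\nu(Z)$ combined with the degree bound. For \eqref{2.8-2} both proofs use the same domination-plus-Tonelli idea; your closing inequality $\nu(B_{v_{1}}\cap B_{v_{2}})\le\nu(B_{v_x})$ is a clean shortcut that sidesteps the doubling argument the paper uses for the upper bound of $Q$. Your route is a little more elementary for the left-hand side and makes the comparison constant $\nu(Z)$ explicit; the paper's route is more unified, with both inequalities flowing from a single two-sided bound on $Q$, and avoids importing the packing estimate as a separate ingredient.
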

\begin{proof}
 Let $X=X_{V}\cup X_{H}$, where $X_{V}$ and $X_{H}$ denote the collections of the vertical edges and the horizontal edges on $X$, respectively.

Let $[v_{0},\xi)$ be a geodesic ray from $v_{0}$ to $\xi$, and let $v_{n}(\xi)$
be the vertex such that $v_{n}(\xi)\in[v_{0},\xi)$ and $v_{n}(\xi)\in V_{n}(\xi)$ for all $n\geq0$. By Proposition \ref{prop-2.1}, for all $\xi\in Z$ and $n\geq0$,
\begin{equation}\label{lem-28-1}
1\leq\#V_{n}(\xi)\leq\#\{w:w\sim v_n(\xi)\} \leq K.
\end{equation}
Since $\nu$ is a doubling measure, if $w\sim v$, then $\nu(B_{v})\approx\nu(B_{w})$.
It follows from \eqref{lem-28-1} that
\begin{align*}
\int_{[v_{n}(\xi),v_{n+1}(\xi)]}\frac{\varphi(|x|)^{p}}{\nu(B_{v_x})}d\mu(x)
&\leqslant\sum_{v\in V_{n}(\xi)}\sum_{V_{n+1}(\xi)\ni w\sim v}\int_{[v,w]}\frac{\varphi(|x|)^{p}}{\nu(B_{v_x})}d\mu(x)\\
&\leqslant C(C_\nu, \alpha, \tau, K)\int_{[v_{n}(\xi),v_{n+1}(\xi)]}\frac{\varphi(|x|)^{p}}{\nu(B_{v_x})}d\mu(x).
\end{align*}
For vertices $v\in V_{n}(\xi)$ and $w\in V_{n+1}(\xi)$, we have $\xi\in B_{v}\cap B_{w}$ and $\chi_{[v,w)}(x)\neq0$ only if $x\in[v,w)\subseteq X_{V}$. It follows from the Fubini-Tonelli theorem that
\begin{align}\label{lem-28-2}
\int_{Z}\int_{[v_{0},\xi)}\frac{\varphi(|x|)^{p}}{\nu(B_{v_x})}d\mu(x)d\nu(\xi)
\approx&\int_{Z}\sum_{n=0}^{\infty}\int_{[v_{n}(\xi),v_{n+1}(\xi)]}\frac{\varphi(|x|)^{p}}{\nu(B_{v_x})}d\mu(x)d\nu(\xi)\notag\\
\approx&\int_{Z}\sum_{n=0}^{\infty}\sum_{v\in V_{n}(\xi)}\sum_{V_{n+1}(\xi)\ni w\sim v}\int_{[v,w]}\frac{\varphi(|x|)^{p}}{\nu(B_{v_x})}d\mu(x)d\nu(\xi)\notag\\
=&\int_{X_{V}}\varphi(|x|)^{p}\int_{Z}\sum_{n=0}^{\infty}\sum_{v\in V_{n}(\xi)}\sum_{V_{n+1}(\xi)\ni w\sim v}\frac{\chi_{[v,w]}(x)}{\nu(B_{v_x})}d\nu(\xi)d\mu(x)\notag\\
=:&\int_{X_{V}}\varphi(|x|)^{p}Q(x)d\mu(x),
\end{align}
where
$$
Q(x)=\int_{Z}\sum_{n=0}^{\infty}\sum_{v\in V_{n}(\xi)}\sum_{V_{n+1}(\xi)\ni w\sim v}\frac{\chi_{[v,w]}(x)}{\nu(B_{v_x})}d\nu(\xi).
$$

Now, we estimate $Q(x)$. Recall that $A_{n}\subseteq A_{m}$ when $m\geq n\geq0$. For any $v=(z,n)\in V_{n}$, let $v_{d}:=(z,n+1)\in V_{n+1}$. Then $B_{v_{d}}\subseteq B_{v}$ and $v_{d}\sim v$. If there is some $v\in V$ such that $x\in [v, v_{d})$, then
\begin{equation}\label{lem-28-3}
Q(x)\gtrsim \frac{\nu(B_{v_{d}})}{\nu(B_{v})}\geqslant C(C_\nu, \alpha, \tau).
\end{equation}
It follows from \eqref{lem-28-2}, \eqref{lem-28-3} and the fact $\cup_{v\in V}[v, v_{d})\subseteq X_{V}$ that
\begin{align*}
\int_{Z}\int_{[v_{0},\xi)}\frac{\varphi(|x|)^{p}}{\nu(B_{v_x})}d\mu(x)d\nu(\xi)
\gtrsim\sum_{v\in V}\int_{[v, v_{d})}\varphi(|x|)^{p}Q(x)d\mu(x)
\gtrsim\sum_{v\in V}\int_{[v, v_{d})}\varphi(|x|)^{p}d\mu(x).
\end{align*}
By Proposition \ref{prop-2.1}, we have
\begin{equation*}
\sum_{v\in V}\int_{[v, v_{d})}\varphi(|x|)^{p}d\mu(x)
\leqslant\int_{X_{V}}\varphi(|x|)^{p}d\mu(x)
\leqslant C(C_\nu, \alpha, \tau, K)\sum_{v\in V}\int_{[v, v_{d})}\varphi(|x|)^{p}d\mu(x).
\end{equation*}
Then we get
\begin{equation}\label{lem-28-4}
\int_{Z}\int_{[v_{0},\xi)}\frac{\varphi(|x|)^{p}}{\nu(B_{v_x})}d\mu(x)d\nu(\xi)\gtrsim\int_{X_{V}}\varphi(|x|)^{p}d\mu(x).
\end{equation}

For some $v\in V_{n}$ and $V_{n+1}\ni w\sim v$, if $x\in[v,w]$, then the doubling property of $\nu$ implies that
\begin{equation}\label{lem-28-5}
Q(x)\lesssim \frac{\nu(\cup_{V_{n+1}\ni w\sim v}B_{w})}{\nu(B_{v})}\leqslant C(C_\nu, \alpha, \tau).
\end{equation}
It follows from \eqref{lem-28-2}, \eqref{lem-28-5} and the fact $X_{V}=\bigcup_{n\geqslant0}\bigcup_{v\in V_{n}}\bigcup_{V_{n+1}\ni w\sim v}[v, w]$ that
\begin{align}\label{lem-28-6}
\int_{Z}\int_{[v_{0},\xi)}\frac{\varphi(|x|)^{p}}{\nu(B_{v_x})}d\mu(x)d\nu(\xi)
\lesssim\int_{X_{V}}\varphi(|x|)^{p}Q(x)d\mu(x)
\lesssim\int_{X_{V}}\varphi(|x|)^{p}d\mu(x).
\end{align}

Since $\varphi$ is a nonnegative function related to the graph distance $|x|=|x-v_{0}|$,
it follows from Proposition \ref{prop-2.1} that
\begin{equation}\label{lem-28-7}
\int_{X}\varphi(|x|)^{p}d\mu(x)\approx\int_{X_{V}}\varphi(|x|)^{p}d\mu(x).
\end{equation}

By combining \eqref{lem-28-4}, \eqref{lem-28-6} and \eqref{lem-28-7}, we get
$$
\int_{Z}\int_{[v_{0},\xi)}\frac{\varphi(|x|)^{p}}{\nu(B_{v_x})}d\mu(x)d\nu(\xi)\approx\int_{X}\varphi(|x|)^{p}d\mu(x).
$$

For the proof of the second assertion, assume that $g\in L^{p}(X, \mu)$. Then it follows from the Fubini theorem and \eqref{lem-28-5} that
\begin{align*}
\int_{Z}\int_{[v_{0},\xi)}\frac{|g(x)|^{p}}{\nu(B_{v_x})}d\mu(x)d\nu(\xi)
\approx&\int_{Z}\sum_{n=0}^{\infty}\int_{[v_{n}(\xi),v_{n+1}(\xi)]}\frac{|g(x)|^{p}}{\nu(B_{v_x})}d\mu(x)d\nu(\xi)\\
=&\int_{X_{V}}|g(x)|^{p}\int_{Z}\sum_{n=0}^{\infty}\frac{\chi_{[v_{n}(\xi),v_{n+1}(\xi)]}(x)}{\nu(B_{v_x})}d\nu(\xi)d\mu(x)\\
\lesssim&\int_{X_{V}}|g(x)|^{p}d\mu(x)
\leqslant\int_{X}|g(x)|^{p}d\mu(x).
\end{align*} Hence the lemma is proved.
\end{proof}

\begin{rem}\label{rem-2.2}

By choosing $\varphi(t)=1$ on $[0,\infty)$ in Lemma \ref{lem-2.8}, we can get
$$
\mu(X)=\int_{X}d\mu\approx\int_{X_{V}}d\mu\approx\nu(Z)\int_{0}^{\infty}\rho(t)dt.
$$
This implies that
 $\int_{0}^{\infty}\rho(t)dt=\infty$ if and only if $\mu(X)=\infty$.
\end{rem}

\section{Proof of Theorem \ref{thm-1.1}}\label{sec-3}
In this section, let $ds=e^{-\epsilon|x|}d|x|$ with $\epsilon=\log\alpha>0$. Also, we assume that $1\leq p<\infty$ and $\rho\in\mathcal F_p$.  Further, by \cite[Proposition 4.4]{BBS}, we can replace $\partial X$, the boundary of $X$, by $Z$. For $\xi\in Z$, let $v_{0}=v_{0}(\xi)$ and let $[v_0, \xi)$ denote a geodesic ray from $v_0$ to $\xi$ with
\beq\label{lem-31-30}
[v_{0},\xi):=\bigcup_{n\geq0}[v_{n}(\xi),v_{n+1}(\xi)],
\eeq
where $v_{n}(\xi)\in V_{n}(\xi)$ and $v_{n+1}(\xi)\in V_{n+1}(\xi)$.

 We develop our arguments based on three cases: $R_{p,\rho}<\infty$, $R_{p,\rho}=\infty$ and $\mu(X)=\infty$.

\begin{lem}\label{lem-3.1}
 Suppose that $R_{p,\rho}<\infty$. Then the trace operator $\mathscr T$ defined in \eqref{equ-wetrf} exists, and
it is a bounded operator
from $\dot N^{1,p}(X)$ to $L^{p}(Z,\nu)$, and also, a bounded operator
from $N^{1,p}(X)$ to $L^{p}(Z,\nu)$.
\end{lem}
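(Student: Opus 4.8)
The plan is as follows. Fix $u$ together with an upper gradient $0\le g\in L^{p}(X,\mu)$; by the discussion after Definition \ref{def-2.5} the function $u$ is absolutely continuous on every edge, so the upper gradient inequality \eqref{gu-up} applies to every compact subarc of any geodesic ray. Along a geodesic ray $[v_{0},\xi)$, which is vertical with exactly one edge on each level, I rewrite $ds$ by \eqref{ds-du}, split $\nu(B_{v_x})^{-1}=\nu(B_{v_x})^{-1/p}\cdot\nu(B_{v_x})^{-1/p'}$, and apply H\"older's inequality (with exponents $p,p'$, read in the $L^{\infty}$ way when $p=1$); since such a ray meets each level once, $\int_{[v_{0},\xi)}f(|x|)\,d|x|=\int_{0}^{\infty}f(t)\,dt$, and so
\[
\int_{[v_{0},\xi)}g\,ds\;\lesssim\;\Big(\int_{[v_{0},\xi)}\frac{g(x)^{p}}{\nu(B_{v_x})}\,d\mu(x)\Big)^{1/p}\Big(\int_{0}^{\infty}e^{-\frac{\epsilon p}{p-1}t}\rho(t)^{\frac{1}{1-p}}\,dt\Big)^{1/p'}=R_{p,\rho}^{1/p'}\Big(\int_{[v_{0},\xi)}\frac{g(x)^{p}}{\nu(B_{v_x})}\,d\mu(x)\Big)^{1/p}.
\]
Raising to the power $p$, integrating in $\xi$ over $Z$, and using \eqref{2.8-2} of Lemma \ref{lem-2.8} gives
\[
\int_{Z}\Big(\int_{[v_{0},\xi)}g\,ds\Big)^{p}d\nu(\xi)\;\lesssim\;R_{p,\rho}^{p/p'}\int_{X}g^{p}\,d\mu<\infty .
\]
Hence $\int_{[v_{0},\xi)}g\,ds<\infty$ for $\nu$-a.e.\ $\xi$, so $\{u(v_{n}(\xi))\}_{n}$ is Cauchy (the tails of $\int g\,ds$ tend to $0$), and by continuity of $u$ on each edge the limit in \eqref{equ-wetrf} exists along the chosen ray for $\nu$-a.e.\ $\xi$.

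Next I would upgrade this to independence of the ray. Let $Y_{\xi}=\bigcup_{n}\bigcup_{v\in V_{n}(\xi)}\bigcup_{w\in V_{n+1}(\xi),\,w\sim v}[v,w]$ be the union of all edges that can occur in a geodesic ray to $\xi$, and set $\int_{Y_{\xi}}g\,ds:=\sum_{e\subset Y_{\xi}}\int_{e}g\,ds$. By Proposition \ref{prop-2.1}, $\#V_{n}(\xi)\le K$, so $Y_{\xi}$ has at most $K^{2}$ edges on each level and, by the doubling property of $\nu$, all balls $B_{v}$ with $v\in V_{n}(\xi)$ are mutually comparable. Applying H\"older's inequality first on each edge, then on the ($\le K^{2}$-term) sum over a level, and finally in $n$ --- exactly as above --- yields
\[
\Big(\int_{Y_{\xi}}g\,ds\Big)^{p}\;\lesssim\;R_{p,\rho}^{p/p'}\sum_{n}\frac{1}{\nu(B_{v_{n}(\xi)})}\sum_{v\in V_{n}(\xi)}\sum_{w\sim v}\int_{[v,w]}g^{p}\,d\mu ,
\]
and since $\{\xi:\ v,w\in\bigcup_{k}V_{k}(\xi)\}=B_{v}\cap B_{w}$, a Fubini computation bounds the $\nu$-integral of the right-hand side by $R_{p,\rho}^{p/p'}\int_{X}g^{p}\,d\mu<\infty$. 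Thus for $\nu$-a.e.\ $\xi$ one has $\int_{Y_{\xi}}g\,ds<\infty$, which dominates $\int_{\gamma}g\,ds$ for every geodesic ray $\gamma$ to $\xi$ at once. Finally, given two geodesic rays to such a $\xi$ with vertex sequences $(v_{n}^{(1)}(\xi))$ and $(v_{n}^{(2)}(\xi))$, the interleaved sequence $(v_{0}^{(1)},v_{1}^{(2)},v_{2}^{(1)},v_{3}^{(2)},\dots)$ is again a geodesic ray to $\xi$ --- any vertex of $V_{n}(\xi)$ neighbours any vertex of $V_{n+1}(\xi)$ since both of their balls contain $\xi$ --- so $u$ has a limit along it, and comparing subsequences forces this limit to equal both of the previous ones. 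Hence $\mathscr Tu(\xi)$ is well defined for $\nu$-a.e.\ $\xi$.

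For the boundedness, the displayed estimate already gives $\int_{Z}\big(\int_{[v_{0},\xi)}g\,ds\big)^{p}d\nu\lesssim R_{p,\rho}^{p/p'}\|g\|_{L^{p}(X,\mu)}^{p}$, and since $|\mathscr Tu(\xi)-u(x)|\le\int_{[v_{0},\xi)}g\,ds$ for every $x\in[v_{0},\xi)$, the choice $x=v_{0}$ yields $\|\mathscr Tu\|_{L^{p}(Z,\nu)}^{p}\lesssim|u(v_{0})|^{p}\nu(Z)+R_{p,\rho}^{p/p'}\|g\|_{L^{p}(X,\mu)}^{p}$; taking the infimum over upper gradients $g$ shows $\mathscr T:\dot N^{1,p}(X)\to L^{p}(Z,\nu)$ is bounded. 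For $N^{1,p}(X)$ I would instead take $x=v_{1}(\xi)$ and control $|u(v_{1}(\xi))|$ on the first edge $[v_{0},v_{1}(\xi)]$: from $|u(v_{1}(\xi))|-\int_{[v_{0},v_{1}(\xi)]}g\,ds\le|u(x)|$, averaging against $\mu$ and using $\nu(B_{v_{1}(\xi)})\lesssim\mu([v_{0},v_{1}(\xi)])$ together with $\int_{0}^{1}\rho^{-1/(p-1)}<\infty$ (the $L^{\infty}$ analogue when $p=1$) gives $|u(v_{1}(\xi))|^{p}\,\nu(B_{v_{1}(\xi)})\lesssim\int_{[v_{0},v_{1}(\xi)]}(|u|^{p}+g^{p})\,d\mu$. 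Summing over the at most $K$ vertices of $V_{1}$, whose edges to $v_{0}$ are pairwise disjoint, bounds $\int_{Z}|u(v_{1}(\xi))|^{p}d\nu$ by $\|u\|_{L^{p}(X,\mu)}^{p}+\|g\|_{L^{p}(X,\mu)}^{p}$, so that $\|\mathscr Tu\|_{L^{p}(Z,\nu)}\lesssim\|u\|_{N^{1,p}(X)}$.

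I expect the main obstacle to be the independence of the limit on the geodesic ray: unlike in the tree setting of \cite{KNW}, a boundary point may be joined to $v_{0}$ by uncountably many geodesic rays, so one cannot simply intersect countably many conull sets. This is exactly what the uniform degree bound of Proposition \ref{prop-2.1} is for --- it keeps the fan $Y_{\xi}$ small enough that a single quantity, finite for $\nu$-a.e.\ $\xi$, controls $\int_{\gamma}g\,ds$ along every geodesic ray $\gamma$ to $\xi$ simultaneously, after which the interleaving argument identifies all the limits.
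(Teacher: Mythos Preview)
Your proof is correct and shares the paper's core strategy: rewrite $ds$ via \eqref{ds-du}, apply H\"older along a ray, invoke \eqref{2.8-2}, and then use an interleaving of two rays to force equality of limits. Two points of your argument, however, diverge from the paper in ways worth noting.

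\textbf{Independence of the ray.} The paper fixes one ray per $\xi$, shows $u^{\ast}(\xi)<\infty$ $\nu$-a.e., and then runs the interleaving argument; it does not make explicit why, for $\nu$-a.e.\ $\xi$, the limit exists along \emph{every} ray simultaneously (which is what is needed before interleaving can produce a contradiction). Your $Y_{\xi}$ construction addresses exactly this: by bounding $\int_{Y_{\xi}}g\,ds$ (which controls $\int_{\gamma}g\,ds$ for every ray $\gamma$ to $\xi$ at once) and showing $\int_{Z}\big(\int_{Y_{\xi}}g\,ds\big)^{p}d\nu<\infty$ via Fubini, you obtain a single $\nu$-conull set on which the Cauchy argument and the interleaving both apply. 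This is a genuine improvement in rigor; the paper leans implicitly on the same bounded-degree fact (Proposition~\ref{prop-2.1}) but does not spell it out here.

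\textbf{Boundedness on $N^{1,p}(X)$.} The paper reduces to the $\dot N^{1,p}$ estimate via the pointwise bound $|u(v_{0})|\lesssim\|u\|_{N^{1,p}(X)}$, which it obtains from $C_{p}(\{v_{0}\})>0$ (Remark~\ref{rem-2.1}, hence Proposition~\ref{Prop-2.3}). Your route --- anchoring instead at $v_{1}(\xi)$ and averaging on the first edge to get $|u(v_{1}(\xi))|^{p}\nu(B_{v_{1}(\xi)})\lesssim\int_{[v_{0},v_{1}(\xi)]}(|u|^{p}+g^{p})\,d\mu$, then summing over the $\le K$ vertices of $V_{1}$ --- is more elementary (it avoids capacity) and yields the same conclusion. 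Both arguments use the local integrability condition \eqref{addition-assumption} at exactly one place: the paper to ensure positive capacity, you to bound $\int_{0}^{1}e^{-\epsilon p't}\rho(t)^{1/(1-p)}\,dt$ in the H\"older step on the first edge.
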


\begin{proof} Assume that $R_{p,\rho}<\infty$. To prove the first part of the lemma, let $u\in \dot N^{1,p}(X)$. We show that the trace $\mathscr T u$ exist $\nu$-a.e. in $Z$, and that $$\|\mathscr T u\|_{L^{p}(Z,\nu)}\lesssim\|u\|_{\dot N^{1,p}(X)}.$$

Let
\begin{equation*}
u^{\ast}(\xi)=|u(v_{0})|+\int_{[v_{0}, \xi)}g_{u}ds
\end{equation*} in $Z$,
where $g_{u}$ is a minimal $p$-weak upper gradient of $u$.

First, we show that $u^{\ast}\in L^{p}(Z,\nu)$.
For each $n$, we deduce from \eqref{ds-du} that
$$
\int_{[v_{n}(\xi),v_{n+1}(\xi)]}g_{u}ds\approx\int_{[v_{n}(\xi),v_{n+1}(\xi)]}g_{u}(x)\frac{e^{-\epsilon|x|}}{\rho(|x|)\nu(B_{v_x})}d\mu(x).
$$

We separate the arguments into the following two cases: $p>1$ and $p=1$. For the former, it follows from the H\"{o}lder inequality that
\begin{align*}
u^{\ast}(\xi)^{p}\lesssim&|u(v_{0})|^{p}+\bigg(\int_{[v_{0}, \xi)}g_{u}ds\bigg)^{p}\\
\lesssim&|u(v_{0})|^{p}+\bigg(\int_{[v_{0},\xi)}\frac{g_{u}(x)^{p}}{\nu(B_{v_x})}d\mu(x)\bigg)\bigg(\int_{[v_{0},\xi)}e^{-\frac{\epsilon p|x|}{p-1}}\rho(|x|)^{-\frac{p}{p-1}}\nu(B_{v_x})^{-1}d\mu(x)\bigg)^{p-1}\\
\lesssim&|u(v_{0})|^{p}+\bigg(\int_{0}^{\infty}e^{-\frac{\epsilon p}{p-1}t}\rho(t)^{\frac{1}{1-p}}dt\bigg)^{p-1}\int_{[v_{0},\xi)}\frac{g_{u}(x)^{p}}{\nu(B_{v_x})}d\mu(x).
\end{align*}
Thus, by Lemma \ref{lem-2.8} and the assumption $R_{p,\rho}<\infty$, we can get
\begin{align*}
\int_{Z}|u^{\ast}(\xi)|^{p}d\nu(\xi)
\lesssim&|u(v_{0})|^{p}\nu(Z)+R_{p,\rho}^{p-1}\int_{Z}\int_{[v_{0},\xi)}\frac{g_{u}(x)^{p}}{\nu(B_{v_x})}d\mu(x)d\nu(\xi)\\
\lesssim&|u(v_{0})|^{p}+\int_{X}g_{u}(x)^{p}d\mu(x).
\end{align*}

For the latter, that is, $p=1$, again, by Lemma \ref{lem-2.8} and the assumption $R_{1,\rho}<\infty$, we obtain
\begin{align*}
\int_{Z}|u^{\ast}(\xi)|d\nu(\xi)
\lesssim&|u(v_{0})|+\int_{Z}\int_{[v_{0},\xi)}g_{u}(x)\frac{e^{-\epsilon|x|}}{\rho(|x|)\nu(B_{v_x})}d\mu(x)d\nu(\xi)\\
\lesssim&|u(v_{0})|+\|e^{-\epsilon t}\rho(t)^{-1}\|_{L^{\infty}([0, \infty))}\int_{Z}\int_{[v_{0},\xi)}\frac{g_{u}(x)}{\nu(B_{v_x})}d\mu(x)d\nu(\xi)\\
\lesssim&|u(v_{0})|+\int_{X}g_{u}d\mu.
\end{align*}
Then for all $1\leq p<\infty$, we have the estimate:
$$
\|u^{\ast}\|_{L^{p}(Z,\nu)}
\lesssim|u(v_{0})|+\bigg(\int_{X}g_{u}^{p}d\mu\bigg)^{\frac{1}{p}}
=\|u\|_{\dot N^{1,p}(X)}.
$$
Hence $u^{\ast}\in L^{p}(Z,\nu)$ for $1\leq p<\infty$. Also, we know that for $\nu$-almost every $\xi\in Z$, $u^{\ast}(\xi)<\infty$.

Second, we demonstrate that if $\xi\in Z$ satisfies $u^{\ast}(\xi)<\infty$, then $\mathscr T u(\xi)$ exists and is independent of the choice of the geodesic rays $[v_{0},\xi)$.

Let $[v_{0},\xi)$ denote a geodesic ray from $v_{0}$ to $\xi$ with the expression \eqref{lem-31-30}. Since $u^{\ast}(\xi)<\infty$,
we have
$$
\int_{[v_{n}(\xi),\xi)}g_{u}ds\rightarrow0 \ \ \text{as} \ \ n\rightarrow\infty.
$$
For any $\varepsilon>0$, there exists an integer $N>0$ such that for any $x_{1}, x_{2}\in [v_{0}, \xi)$, if $|x_{1}|>N$ and $|x_{2}|>N$, then
$$
|u(x_{1})-u(x_{2})|\leqslant\int_{[x_{1},x_{2}]}g_{u}ds<\varepsilon,
$$
where $[x_{1},x_{2}]\subset [v_{0}, \xi)$. Then Cauchy's convergence test implies that the limit $\mathscr T u(\xi)$ exists along $[v_{0},\xi)$.

To prove that $\mathscr T u(\xi)$ is independent of the choice of $[v_{0},\xi)$, let $J_{1}(\xi)$ and $J_{2}(\xi)$ be two geodesic rays from $v_0$ and $\xi$ such that
$$
\lim_{J_{1}(\xi)\ni x\rightarrow\xi}u(x)=A_{1} \ \ \ \text{and} \ \ \ \lim_{J_{2}(\xi)\ni x\rightarrow\xi}u(x)=A_{2}
$$ with $A_{1}\not=A_{2},$ where
$$
J_{1}(\xi)=\bigcup_{n\geq 0}[v_{n}^{1}(\xi),v_{n+1}^{1}(\xi)] ,\;\; J_{2}(\xi)=\bigcup_{n\geq0}[v_{n}^{2}(\xi),v_{n+1}^{2}(\xi)],
$$
and $v_{n}^{i}(\xi)\in V_{n}(\xi)$ for $i=1,2$.
Note that for any $v\in V_{n}(\xi)$ and $w\in V_{n+1}(\xi)$, $w\sim v$. Take
$$
J=\bigcup_{n\geq 0}([v_{2n}^{1}(\xi),v_{2n+1}^{2}(\xi)]\cup[v_{2n+1}^{2}(\xi),v_{2n+2}^{1}(\xi)]).
$$
Then the limit $\mathscr T u(\xi)$ does not exist along the geodesic ray $J$. This contradiction shows that the assertion is true.
Now, we conclude that the trace function $\mathscr T u$ exists for $\nu$-a.e. in $Z$.

Third, we show that $\mathscr T u\in L^{p}(Z, \nu)$ for $u\in \dot N^{1,p}(X)$.
For this, let $g_{u}$ be a minimal $p$-weak upper gradient of $u$.
Let $\xi\in Z$ and $x\in[v_{0}, \xi)$, and let $[v_{0}, x]\subset [v_{0}, \xi)$ be the geodesic segment joining $v_0$ to $x$. By the definition of minimal $p$-weak upper gradients, we have
$$
|u(x)|-|u(v_{0})|\leq |u(x)-u(v_{0})|\leq \int_{[v_{0}, x]}g_{u}ds\leq \int_{[v_{0}, \xi)}g_{u}ds.
$$
If $\mathscr T u(\xi)$ exists, then $|u(x)|\rightarrow|\mathscr T u(\xi)|$ as $x\rightarrow\xi$.
 Hence, for $\nu$-almost every $\xi\in Z$,
$$
|\mathscr T u(\xi)|\leqslant|u(v_{0})|+\int_{[v_{0}, \xi)}g_{u}ds=u^{\ast}(\xi),
$$
and thus,
\beq\label{lem-31-1}
\|\mathscr T u\|_{L^{p}(Z,\nu)}\lesssim|u(v_{0})|+\bigg(\int_{X}g_{u}^{p}d\mu\bigg)^{\frac{1}{p}}=\|u\|_{\dot N^{1,p}(X)}<\infty,
\eeq since $\bigg(\int_{X}g_{u}^{p}d\mu\bigg)^{\frac{1}{p}}=\inf_{g}\|g\|_{L^{p}(X, \mu)}$.

These show that $\mathscr T u\in L^{p}(Z, \nu)$ and $\mathscr T: \dot N^{1,p}(X)\rightarrow L^{p}(Z,\nu)$ is a bounded operator. Hence the lemma is proved.

Next, we are going to show the second part of the lemma, that is, $\mathscr T: N^{1,p}(X)\rightarrow L^{p}(Z,\nu)$ is bounded.
Observe that  the continuity of $u$ is not required in the proof of the inequality \eqref{lem-31-1}. Then it follows that for any $u\in N^{1,p}(X)$,
\beqq
\|\mathscr T u\|_{L^{p}(Z,\nu)}\lesssim|u(v_{0})|+\bigg(\int_{X}g_{u}^{p}d\mu\bigg)^{\frac{1}{p}},
\eeqq
and so, to prove this claim, we only need to show that for all $1\leq p<\infty$ and all $u\in N^{1,p}(X)$,
\beq\label{lem-31-3}
|u(v_{0})|\lesssim\|u\|_{N^{1,p}(X)}.
\eeq

To reach this goal, let $u\in N^{1,p}(X)$. If $u(v_{0})=0$, then the assertion is true. Assume that $u(v_{0})\neq 0$. Since the function $u(x)/u(v_{0})$ satisfies $u(x)/u(v_{0})=1$ on the set $\{v_{0}\}$, by Definition \ref{def-2.7}, we have that for all $1\leq p<\infty$,
$$
|u(v_{0})|^{p}C_{p}(\{v_{0}\})\leq\|u\|_{N^{1,p}(X)}^{p}.
$$
From Remark \ref{rem-2.1}, we know that $C_{p}(\{v_{0}\})>0$, and then,
$$
|u(v_{0})|\lesssim\|u\|_{N^{1,p}(X)}.
$$
This proves \eqref{lem-31-3}, which indicates that the second part of the lemma is true, and hence, the lemma is proved.
\end{proof}

\begin{lem}\label{lem-3.2} Suppose that $R_{p,\rho}=\infty$.\ben
\item[$(1)$]
If $1\leq p<\infty$, then there exists a function $u\in \dot N^{1,p}(X)$ such that for any $\xi\in Z,$
$$
\lim_{[v_{0},\xi)\ni x\rightarrow\xi}u(x)=+\infty.
$$
\item[$(2)$]
If $p=1$, then there exists a function $u\in N^{1,1}(X)$ such that for any $\xi\in Z$, the limit
$
\lim_{[v_{0},\xi)\ni x\rightarrow\xi}u(x)
$
does not exist.

\item[$(3)$]
If $1< p<\infty$ and if $\mu(X)<\infty$, then there exists a function $u\in N^{1,p}(X)$ such that for any $\xi\in Z$, the limit
$
\lim_{[v_{0}, \xi)\ni x\rightarrow\xi}u(x)
$
does not exist.
\een
\end{lem}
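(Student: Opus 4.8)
All three parts will follow from one ``radial'' construction. For a locally absolutely continuous $f:[0,\infty)\to\real$ with $f(0)=0$, put $u(x)=f(|x|)$ on $X$. Then $u$ is continuous, and since $|\cdot|$ is $1$-Lipschitz with respect to the graph metric and $ds=e^{-\epsilon|x|}d|x|$, one checks directly (passing to the graph-length parametrization of an arbitrary rectifiable curve and using absolute continuity of $f$) that $g_u(x):=|f'(|x|)|e^{\epsilon|x|}$ is an upper gradient of $u$; indeed $\int_\gamma g_u\,ds=\int_\gamma|f'(|z|)|\,d|z|\geq|u(x)-u(y)|$ for any rectifiable $\gamma$ joining $x$ to $y$. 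By \eqref{eq-add-zw}, Remark \ref{rem-2.2} and the computations in the proof of Lemma \ref{lem-2.8}, for every nonnegative Borel $\psi$ on $[0,\infty)$ one has $\int_X\psi(|x|)\,d\mu\approx\nu(Z)\int_0^\infty\psi(t)\rho(t)\,dt$, whence
$$\int_X|u|^p\,d\mu\approx\nu(Z)\int_0^\infty|f(t)|^p\rho(t)\,dt,\qquad \int_X g_u^p\,d\mu\approx\nu(Z)\int_0^\infty|f'(t)|^pe^{\epsilon pt}\rho(t)\,dt.$$
Moreover along any geodesic ray $[v_0,\xi)=\bigcup_{n\geq0}[v_n(\xi),v_{n+1}(\xi)]$ the quantity $|x|$ increases continuously from $0$ to $\infty$, so $u(x)=f(|x|)$ has a limit along $[v_0,\xi)$ if and only if $\lim_{t\to\infty}f(t)$ exists, and the two then agree --- simultaneously for every $\xi$ and every ray. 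Thus it suffices to construct suitable one-dimensional $f$'s.

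The one elementary fact used throughout is that, because $\rho\in\mathcal F_p$, the function $e^{-\frac{\epsilon p}{p-1}t}\rho(t)^{\frac1{1-p}}$ (when $p>1$) lies in $L^1_{\loc}([0,\infty))$ and $e^{-\epsilon t}\rho(t)^{-1}$ (when $p=1$) lies in $L^\infty_{\loc}([0,\infty))$. Hence the hypothesis $R_{p,\rho}=\infty$ forces $\int_T^\infty e^{-\frac{\epsilon p}{p-1}t}\rho(t)^{\frac1{1-p}}dt=\infty$ (resp. $\|e^{-\epsilon t}\rho(t)^{-1}\|_{L^\infty([T,\infty))}=\infty$) for \emph{every} $T\geq0$, so the relevant ``bumps'' of $f$ can always be pushed out to infinity; this is exactly what destroys the radial limit. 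I will also use the identities $-\tfrac{\epsilon p^2}{p-1}+\epsilon p=-\tfrac{\epsilon p}{p-1}$ and $\tfrac{p}{1-p}+1=\tfrac1{1-p}$, which turn $\int|f'|^pe^{\epsilon pt}\rho\,dt$ into an integral of $e^{-\frac{\epsilon p}{p-1}t}\rho^{\frac1{1-p}}$ whenever $f'$ is proportional to $e^{-\frac{\epsilon p}{p-1}t}\rho^{\frac1{1-p}}$.

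For (1) with $p>1$: partition $[0,\infty)$ into intervals $J_n$ with $\int_{J_n}e^{-\frac{\epsilon p}{p-1}t}\rho^{\frac1{1-p}}dt=1$ and set $f'=\tfrac1n e^{-\frac{\epsilon p}{p-1}t}\rho^{\frac1{1-p}}$ on $J_n$; then $f\nearrow\sum_n\tfrac1n=\infty$ while $\int_0^\infty|f'|^pe^{\epsilon pt}\rho\,dt=\sum_n n^{-p}<\infty$, so $u\in\dot N^{1,p}(X)$ and $u\to+\infty$ along every ray. For $p=1$ run the $\widehat h$-construction from the proof of Proposition \ref{Prop-2.3} on all of $[0,\infty)$ and take $f(t)=\int_0^t\widehat h(s)e^{-\epsilon s}ds$; then $f\nearrow\infty$ and $\int_0^\infty|f'|e^{\epsilon t}\rho\,dt=\int_0^\infty\widehat h\rho\,dt<\infty$. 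For (3), where in addition $\mu(X)<\infty$, hence $\int_0^\infty\rho<\infty$ by Remark \ref{rem-2.2}: choose disjoint bounded intervals $J_n$ with $\inf J_n\to\infty$, $\sup J_n<\inf J_{n+1}$ and $\int_{J_n}e^{-\frac{\epsilon p}{p-1}t}\rho^{\frac1{1-p}}dt=2^n$, split each $J_n$ into halves $J_n^\pm$ of equal such mass, and let $f$ be the unit ``tent'' given by $f'=\pm 2^{1-n}e^{-\frac{\epsilon p}{p-1}t}\rho^{\frac1{1-p}}$ on $J_n^\pm$ and $f'=0$ off $\bigcup_n J_n$; then $0\leq f\leq1$, $f=0$ off $\bigcup_n J_n$, $f$ attains the value $1$ on each $J_n$, so $\limsup_{t\to\infty}f=1\neq0=\liminf_{t\to\infty}f$, while $\int_X|u|^pd\mu\lesssim\int_0^\infty\rho<\infty$ and $\int_X g_u^pd\mu\lesssim\sum_{n\geq1}2^{-(p-1)n}<\infty$; hence $u\in N^{1,p}(X)$ and $u$ has no limit along any ray. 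For (2), $p=1$ with no hypothesis on $\mu(X)$: pick disjoint bounded sets $H_n$ with $0<m(H_n)\leq1$, $\inf H_n\to\infty$, $\sup H_n<\inf H_{n+1}$ and $e^{\epsilon t}\rho(t)<2^{-n}$ on $H_n$, split each $H_n$ at its Lebesgue median into halves $H_n^\pm$, and take the unit tent $f$ with $f'=\pm 2/m(H_n)$ on $H_n^\pm$, $f'=0$ off $\bigcup_n H_n$; then $\int_X|u|d\mu\lesssim\sum_n2^{-n}m(H_n)<\infty$ and $\int_X g_ud\mu\lesssim\sum_n\tfrac{2}{m(H_n)}\cdot2^{-n}m(H_n)<\infty$, so $u\in N^{1,1}(X)$, while again $\liminf_{t\to\infty}f=0<1=\limsup_{t\to\infty}f$ shows $u$ has no limit along any ray.

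The main --- though modest --- obstacle is the rigorous verification that $g_u(x)=|f'(|x|)|e^{\epsilon|x|}$ is genuinely an upper gradient of $u(x)=f(|x|)$ on the metric graph $(X,d)$: one must handle horizontal edges, along which $|x|$ is not monotone, and concatenations of edges, which is done by passing to the graph-length parametrization of a curve, noting that $|\cdot|$ is $1$-Lipschitz there and that $g_u\,ds=|f'(|z|)|\,d|z|$, and invoking absolute continuity of $f$. Everything else is one-dimensional bookkeeping; note that $\rho\in\mathcal F_p$ is used only to place the bumps arbitrarily far out, and $\mu(X)<\infty$ enters (3) solely to ensure the bounded function $u$ lies in $L^p$ --- a constraint avoided in (2) because, for $p=1$, the bumps may be put on sets of arbitrarily small $\rho$-mass.
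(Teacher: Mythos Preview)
Your overall strategy---radial functions $u(x)=f(|x|)$ with upper gradient $g_u(x)=|f'(|x|)|e^{\epsilon|x|}$, reducing everything to one-dimensional constructions via Lemma~\ref{lem-2.8}---is exactly the route the paper takes, and parts (1) and (3) are correct and match the paper's arguments up to cosmetic differences in how the intervals are indexed.

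Part (2), however, has a genuine gap in the $L^1$-estimate of $u$. Your tent $f$ is supported not on $\bigcup_n H_n$ but on $\bigcup_n\operatorname{conv}(H_n)$: since $H_n$ need not be an interval (the level set $\{t:e^{\epsilon t}\rho(t)<2^{-n}\}$ is merely Borel), on the gaps of $H_n$ inside its convex hull the function $f$ sits at whatever height it has reached, and there you have \emph{no} control on $\rho$. Hence the bound $\int_X|u|\,d\mu\lesssim\sum_n 2^{-n}m(H_n)$ is unjustified; $\int_{\operatorname{conv}(H_n)}\rho$ could be arbitrarily large. The paper avoids this by a two-step choice: first subdivide a large interval $[t_k,t_{k+1}]$ into pieces $I_{k,j}$ each with $\int_{I_{k,j}}\rho\leq 2^{-k}$, then select one piece $I_k$ meeting the level set $E_k=\{t:e^{-\epsilon t}\rho(t)^{-1}\geq 2^k\}$ in positive measure, and place the derivative of the tent on $E_k\cap I_k$ while the tent itself is confined to the \emph{interval} $I_k$. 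Your construction is easily repaired along the same lines---additionally require $\int_{\operatorname{conv}(H_n)}\rho\leq 2^{-n}$, which can always be arranged since $\rho\in L^1_{\loc}$---but as written the $L^1$ bound fails.
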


\begin{proof}
$(1)$. Since $R_{p,\rho}=\infty$ for $1\leq p<\infty$,
it follows from \cite[Theorem 3.5]{KNW} that there is a nonnegative measurable function $q_1:$ $[0,\infty)\rightarrow[0,\infty]$ such that
$$\int_{0}^{\infty}q_1(t)e^{^{-\epsilon t}}dt=+\infty \;\;\mbox{and}\;\;\int_{0}^{\infty}q_1(t)^{p}\rho(t)dt<+\infty. $$

Let $$u_1(x)=\int_{0}^{|x|}q_1(t)e^{^{-\epsilon t}}dt$$ in $X$, and let $$g_1(x) = q_1(|x|).$$ Then $g_1$ is an upper gradient of $u_1$. It follows from $0<\nu(Z)<\infty$ and Lemma \ref{lem-2.8} that
$$
\|g_1\|_{L^{p}(X, \mu)}^{p}\approx\int_{Z}\int_{[v_{0},\xi)}\frac{q_1(|x|)^{p}}{\nu(B_{v_x})}d\mu(x)d\nu(\xi)\approx\int_{0}^{\infty}q_1(t)^{p}\rho(t)dt<\infty.
$$
These show that the function $u_1$ satisfies the requirement in the first statement of the lemma.

$(2)$. Assume that $R_{1,\rho}=\infty$. For each $k\in\mathbb{N}$, let
$$
E_{k}:=\{t\in[0,\infty):e^{-\epsilon t}\rho(t)^{-1}\geq2^{k}\}.
$$ Then the assumption $R_{1,\rho}=\infty$ implies that for each $k\in\mathbb{N}$,
$m(E_{k})>0$. Otherwise, there exists $k_0\in\mathbb{N}$ such that $e^{-\epsilon t}\rho(t)^{-1}<2^{k_0}$ a.e. in $[0,\infty)$. It is impossible.

Since $e^{-\epsilon t}\rho(t)^{-1}\in L^{\infty}_{\text{loc}}([0,\infty))$, again, the assumption $R_{1,\rho}=\infty$ implies that there exists an increasing sequence $\{t_{k}:t_{k}\in[0,\infty)\}_{k\in\mathbb{N}}$ such that
$
\bigcup_{k\in\mathbb{N}}[t_{k},t_{k+1}]=[0,\infty)
$
and for any $k\in\mathbb{N}$,
\begin{equation}\label{lem-32-1}
m(E_{k}\cap[t_{k},t_{k+1}])>0.
\end{equation}

Let
$$
L_{k}=\int_{t_{k}}^{t_{k+1}}\rho(t)dt.
$$
As $\rho\in L_{\text{loc}}^{1}([0,\infty))$, we see that $0<L_{k}<\infty$.
It follows that $[t_{k},t_{k+1}]$ can be divided into $\lceil2^{k}L_{k}\rceil$ subintervals $\{I_{k, j}\}$ such that their interiors are pairwise disjoint,
$$
\bigcup_{j=1}^{\lceil2^{k}L_{k}\rceil}I_{k, j}=[t_{k},t_{k+1}] \ \  \text{and} \ \  0<\int_{I_{k, j}}\rho(t)dt\leq2^{-k},
$$
where $\lceil 2^{k}L_{k}\rceil$ denotes the least integer greater than $2^{k}L_{k}$. From \eqref{lem-32-1}, we can choose one subinterval $I_{k}\in\{I_{k, j}\}$ satisfying $$m(E_{k}\cap I_{k})>0.$$

We define the function $q_2: [0,\infty)\rightarrow[0,\infty]$ by setting
\begin{equation}\label{lem-32-2}
q_2(t)=\sum_{k\in\mathbb{N}}\frac{2}{\int_{E_{k}\cap I_{k}}e^{-\epsilon t}dt}\chi_{E_{k}\cap I_{k}}(t).
\end{equation}
For any $k\in\mathbb{N}$,
\begin{equation}\label{lem-32-3}
\int_{t_{k}}^{t_{k+1}}q_2(t)e^{-\epsilon t}dt=\int_{E_{k}\cap I_{k}}\frac{2e^{-\epsilon t}}{\int_{E_{k}\cap I_{k}}e^{-\epsilon t}dt}dt=2,
\end{equation}
which implies that there is $t_k^\prime\in(t_{k}, t_{k+1})$ such that
$$
\int_{t_{k}}^{t_k^\prime}q_2(t)e^{-\epsilon t}dt=\int_{t_k^\prime}^{t_{k+1}}q_2(t)e^{-\epsilon t}dt=1.
$$
We define the function $u_2$ as follows: For all $k\in\mathbb{N}$,
\begin{equation*}
u_2(x)=
\left\{\begin{array}{cl}
\int_{t_{k}}^{|x|}q_2(t)e^{-\epsilon t}dt, \,&\text{if} \ \ |x|\in[t_{k}, t_{k}^{\prime})\cap I_{k}, \\
\int_{|x|}^{t_{k+1}}q_2(t)e^{-\epsilon t}dt, \,&\text{if} \ \ |x|\in[t_{k}^{\prime}, t_{k+1})\cap I_{k}, \\
0, \,&\text{if} \ \ |x|\in[t_{k}, t_{k+1})\setminus I_{k}
\end{array}\right.
\end{equation*} in $X$.

Obviously, $0\leq u_2(x)\leq 1$ in $X$, and for each $k$, there must be a pair of points $y_k$ and $z_k$ in $[v_0, \xi)$ such that $$|y_k|=t_k\;\;\mbox{and}\;\;|z_k|=t_k^{\prime}.$$ In this way, we find two sequences $\{y_k\}$ and $\{z_k\}$ of $[v_0, \xi)$ such that\
$$u_2(y_{k})=0\;\;\mbox{and}\;\; u_2(z_{k})=1,$$ and
$$y_k\rightarrow\xi\;\;\mbox{and} \;\;z_k\rightarrow\xi$$ as $k\rightarrow\infty$.

These show that the function $u_2$ has no limit along $[v_0, \xi)$, i.e., the limit $\lim_{[v_{0},\xi)\ni x\rightarrow\xi}u_2(x)$ does not exist.

It remains to prove that $u_2\in N^{1,1}(X)$.  For this, let $$g_2(x):=q_2(|x|),$$ where $q_2$ is defined in \eqref{lem-32-2}. For any $k\in\mathbb{N}$, if $|x|\in [t_{k}, t_{k+1})\setminus I_{k}$, then $g_2(x)=0$. This fact implies that $g_2$ is an upper gradient of $u_2$.
We first estimate the $L^{1}$-norm of the upper gradient $g_2$. It follows from \eqref{lem-32-3}, together with the definitions of the function $g_2$ and the set $E_{k}$, that
\begin{align*}
\int_{X}g_2d\mu\lesssim&\int_{Z}\int_{[v_{0},\xi)}\frac{q_2(|x|)}{\nu(B_{v_x})}d\mu(x)d\nu(\xi)\lesssim \nu(Z)\int_{0}^{\infty}q_2(t)\rho(t)dt \\
\lesssim &\sum_{k\geq0}\int_{E_{k}\cap I_{k}}q_2(t)\rho(t)dt
\lesssim  \sum_{k\geq0}2^{-k}\int_{E_{k}\cap I_{k}}q_2(t)e^{-\epsilon t}dt\\
=&\sum_{k\geq0}2^{1-k}<\infty.
\end{align*}

Note that for any $x\in X$, if $u_2(x)\neq0$, then there is $k\in\mathbb N$ such that $|x|\in I_{k}$. It follows from the fact $0\leq u_2(x)\leq 1$ and Lemma \ref{lem-2.8} that
\begin{align*}
\int_{X}|u_2|d\mu=&\sum_{k\geq0}\int_{\{x\in X:|x|\in I_{k}\}}u_2(x)d\mu(x)\\
\lesssim&\sum_{k\geq0}\int_{Z}\int_{\{x\in [v_{0},\xi):|x|\in I_{k}\}}\frac{u_2(x)}{\nu(B_{v_x})}d\mu(x)d\nu(\xi)\\
\lesssim&\sum_{k\geq0}\nu(Z)\int_{I_{k}}\rho(t)dt\\
\lesssim&\sum_{k\geq0}2^{-k}<\infty,
\end{align*}
from which we conclude that $u_2\in N^{1,1}(X)$.

$(3)$. For $1<p<\infty$, it follows from the assumption $R_{p,\rho}=\infty$ that there exists a sequence $\{[t_{j},t_{j+1}]\}_{j\in\mathbb{N}}$ as the subintervals of $[0,\infty)$ such that $$\bigcup_{j\in\mathbb{N}}[t_{j},t_{j+1}]=[0,\infty),$$ and for all $j\in\mathbb{N}$,
$$
\int_{t_{j}}^{t_{j+1}}e^{-\frac{\epsilon p}{p-1}t}\rho(t)^{\frac{1}{1-p}}dt\geqslant2^{j}.
$$
Also, we see that there is $t_{j}^{\prime}\in(t_{j},t_{j+1})$ such that
$$
\int_{t_{j}}^{t_{j}^{\prime}}e^{-\frac{\epsilon p}{p-1}t}\rho(t)^{\frac{1}{1-p}}dt=\frac{1}{2}\int_{t_{j}}^{t_{j+1}}e^{-\frac{\epsilon p}{p-1}t}\rho(t)^{\frac{1}{1-p}}dt.
$$
Let us define a function $q_3: [0,\infty)\rightarrow[0,\infty]$ by setting
\beq\label{lem-32-4}
q_3(t)=\sum_{j\in\mathbb{N}}\frac{2e^{-\frac{\epsilon t}{p-1}}\rho(t)^{\frac{1}{1-p}}}{\int_{t_{j}}^{t_{j+1}}e^{-\frac{\epsilon p}{p-1}t}\rho(t)^{\frac{1}{1-p}}dt}\chi_{[t_{j},t_{j+1})}(t),
\eeq
and then, we define a function $u_3$ as follows: For $j\in\mathbb{N}$,
\begin{equation*}
u_3(x)=
\left\{\begin{array}{cl}
\int_{t_{j}}^{|x|}q_3(t)e^{-\epsilon t}dt, \,&\text{if} \ \ |x|\in[t_{j},t_{j}^{\prime}), \\
\int_{|x|}^{t_{j+1}}q_3(t)e^{-\epsilon t}dt, \,&\text{if} \ \ |x|\in[t_{j}^{\prime},t_{j+1})
\end{array}\right.
\end{equation*}  in $X$.
Then we have that $0\leq u_3(x)\leq 1$ for $x\in X$, and there exist two sequences $\{y_j\}$ and $\{z_j\}\subset [v_0, \xi)$ such that for each $k$,
$$ u_3(y_k)=1 \;\; \text{and} \;\; u_3(z_k)=0,$$ and
$$y_k\rightarrow\xi \;\; \text{and} \;\; z_k\rightarrow\xi$$ as $k\rightarrow\infty.$
These show that the function $u_3$ has no limit along the geodesic ray $[v_0, \xi)$.

In the following, we show that $u_3\in N^{1,p}(X)$. To reach this goal, let $$g_3(x):=q_3(|x|),$$ where $q_3$ is defined in \eqref{lem-32-4}.
Then $g_3$ is an upper gradient of $u_3$.

 First, we estimate $L^{p}$-norm of $u_3$. Since $\mu(X)<\infty$ and $u_3(x)\in[0,1]$ in $X$, we have
$$
\int_{X}|u_3(x)|^{p}d\mu\leqslant\mu(X)<\infty.
$$

Next, we estimate $L^{p}$-norm of the upper gradient $g_3$. By \eqref{lem-32-4} and elementary calculations, we get that
\begin{align*}
\int_{0}^{\infty}q_3(t)^{p}\rho(t)dt
\lesssim&\sum_{j\in\mathbb{N}}\int_{t_{j}}^{t_{j+1}}\frac{2^{p}e^{-\frac{\epsilon p}{p-1}t}\rho(t)^{\frac{1}{1-p}}}{(\int_{t_{j}}^{t_{j+1}}e^{-\frac{\epsilon p}{p-1}t}\rho(t)^{\frac{1}{1-p}}dt)^{p}}dt\\
=&\sum_{j\in\mathbb{N}}2^{p}\bigg(\int_{t_{j}}^{t_{j+1}}e^{-\frac{\epsilon p}{p-1}t}\rho(t)^{\frac{1}{1-p}}dt\bigg)^{1-p}\\
\leqslant&\sum_{j\in\mathbb{N}}2^{p}2^{j(1-p)}<\infty.
\end{align*}
It follows from Lemma \ref{lem-2.8} that
\begin{align*}
\int_{X}g_3(x)^{p}d\mu
\approx\int_{Z}\int_{[v_{0},\xi)}\frac{q_3(|x|)^{p}}{\nu(B_{v_x})}d\mu(x)d\nu(\xi)
\lesssim\nu(Z)\int_{0}^{\infty}q_3(t)^{p}\rho(t)dt<\infty.
\end{align*}
This guarantees that $u_3\in N^{1,p}(X)$.
\end{proof}

\begin{lem}\label{lem-3.3}  Suppose that $\mu(X)=\infty$.
If $1< p<\infty$, then the following conditions are equivalent: \ben
\item[$(a)$]
$\mathcal{R}_{p,\rho}<\infty$;
\item[$(b)$]
$\mathscr Tf$ exists for every $f\in N^{1,p}(X)$ and $\mathscr Tf(\xi)=0$ for $\nu$-almost every $\xi\in Z$;
\item[$(c)$]
$\mathscr Tf$ exists for every $f\in N^{1,p}(X)$.
\een
\end{lem}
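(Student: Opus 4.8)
I would establish the cycle $(a)\Rightarrow(b)\Rightarrow(c)\Rightarrow(a)$; the implication $(b)\Rightarrow(c)$ is trivial. Since $\mu(X)=\infty$, Remark~\ref{rem-2.2} gives $\int_0^\infty\rho(t)\,dt=\infty$, so the partition $\{O_k\}=\{[t_k,t_{k+1})\}$ from \eqref{Rp-2} is available, with $\int_{O_k}\rho(t)\,dt=1$ for every $k$ and $\mathcal{R}_{p,\rho}=\sup_k\int_{O_k}e^{-\frac{\epsilon p}{p-1}t}\rho(t)^{\frac1{1-p}}\,dt$; both normalizations are used throughout.

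\textbf{$(a)\Rightarrow(b)$.} Let $f\in N^{1,p}(X)$ with minimal $p$-weak upper gradient $g_f$. Applying \eqref{2.8-2} to $g_f$ and to $f$ (and recalling that the proof of Lemma~\ref{lem-2.8} in fact bounds the integral over the union of all geodesic rays from $v_0$ to $\xi$), I get that for $\nu$-a.e. $\xi\in Z$ both $\int_{[v_0,\xi)}\nu(B_{v_x})^{-1}g_f^p\,d\mu$ and $\int_{[v_0,\xi)}\nu(B_{v_x})^{-1}|f|^p\,d\mu$ are finite. Fix such a $\xi$ and set $a_k=\int_{[v_0,\xi)\cap\{|x|\in O_k\}}\nu(B_{v_x})^{-1}g_f^p\,d\mu$ and $b_k=\int_{[v_0,\xi)\cap\{|x|\in O_k\}}\nu(B_{v_x})^{-1}|f|^p\,d\mu$, so $\sum_k a_k<\infty$, $\sum_k b_k<\infty$, hence $a_k,b_k\to0$. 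Two estimates then drive the argument. First, for any $x,x'$ on a geodesic ray to $\xi$ with $|x|,|x'|\in O_k$, joining them by a curve inside the $O_k$-slab and using \eqref{ds-du}, H\"older's inequality and $\int_{O_k}e^{-\frac{\epsilon p}{p-1}t}\rho(t)^{\frac1{1-p}}\,dt\le\mathcal{R}_{p,\rho}$ gives $|f(x)-f(x')|\le\int g_f\,ds\lesssim\mathcal{R}_{p,\rho}^{\,1-1/p}\,a_k^{1/p}$. Second, since $\int_{[v_0,\xi)\cap\{|x|\in O_k\}}\nu(B_{v_x})^{-1}\,d\mu\approx\int_{O_k}\rho(t)\,dt=1$, there is a point $x_k$ on the ray with $|x_k|\in O_k$ and $|f(x_k)|\lesssim b_k^{1/p}$. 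Combining the two, $|f(x)|\lesssim b_k^{1/p}+\mathcal{R}_{p,\rho}^{\,1-1/p}a_k^{1/p}\to0$ for every $x$ on the ray with $|x|\in O_k$; thus $f(x)\to0$ as $x\to\xi$ along every geodesic ray, so $\mathscr Tf(\xi)=0$ exists and is trivially independent of the ray. This is $(b)$.

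\textbf{$(c)\Rightarrow(a)$.} I prove the contrapositive: assuming $\mathcal{R}_{p,\rho}=\infty$, I build $f\in N^{1,p}(X)$ whose trace does not exist. Because $\sup_k\int_{O_k}e^{-\frac{\epsilon p}{p-1}t}\rho(t)^{\frac1{1-p}}\,dt=\infty$ while (as $\rho\in\mathcal F_p$) each of these integrals is finite, I pick $k_1<k_2<\cdots$ with $M_j:=\int_{O_{k_j}}e^{-\frac{\epsilon p}{p-1}t}\rho(t)^{\frac1{1-p}}\,dt\ge 4^j$, partition each $O_{k_j}$ into $2^j$ subintervals of equal $\rho$-measure $2^{-j}$, and let $T_j$ be one on which $\int_{T_j}e^{-\frac{\epsilon p}{p-1}t}\rho(t)^{\frac1{1-p}}\,dt\ge 2^{-j}M_j\ge 2^j$. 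Exactly as in part~(3) of Lemma~\ref{lem-3.2}, I take $q_j$ supported on $T_j$, proportional to $e^{-\frac{\epsilon t}{p-1}}\rho(t)^{\frac1{1-p}}$ and normalized so that each half of $T_j$ contributes $\int q_j e^{-\epsilon t}\,dt=1$, define the radial function $f$ to vanish on the levels outside $\bigcup_j T_j$ and on $O_{k_j}$ to rise from $0$ to $1$ and back to $0$ across $T_j$, and set $g(x)=\sum_j q_j(|x|)$, an upper gradient of $f$. By Lemma~\ref{lem-2.8}, $\int_X|f|^p\,d\mu\approx\nu(Z)\sum_j\int_{O_{k_j}}|f|^p\rho\,dt\le\nu(Z)\sum_j\int_{T_j}\rho\,dt=\nu(Z)\sum_j 2^{-j}<\infty$ (using $0\le f\le1$ and that $f$ vanishes, inside each $O_{k_j}$, off the levels $T_j$), while $\int_X g^p\,d\mu\approx\nu(Z)\sum_j\int_{T_j}q_j^p\rho\,dt\approx\nu(Z)\sum_j\bigg(\int_{T_j}e^{-\frac{\epsilon p}{p-1}t}\rho(t)^{\frac1{1-p}}\,dt\bigg)^{1-p}\le\nu(Z)\sum_j 2^{j(1-p)}<\infty$ since $p>1$; hence $f\in N^{1,p}(X)$. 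Finally every geodesic ray passes through every level, so for each $\xi\in Z$ the function $f$ equals $1$ at the midpoint level of each $T_j$ and $0$ at each level $t_{k_j}$, and therefore has no limit along $[v_0,\xi)$; thus $\mathscr Tf$ does not exist, contradicting $(c)$.

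\textbf{The main difficulty.} The delicate step is the construction in $(c)\Rightarrow(a)$. Since $\mu(X)=\infty$, the $\mu$-measure of the slab over every $O_k$ is already $\approx\nu(Z)$, so one cannot place a full-amplitude oscillation over all the chosen slabs — as one does in the case $\mu(X)<\infty$ of Lemma~\ref{lem-3.2}\,(3) — without losing $\|f\|_{L^p(X,\mu)}<\infty$. Subdividing $O_{k_j}$ into pieces of equal $\rho$-measure is precisely what reconciles the two competing requirements: it confines $f$ (as a function of $|x|$) to a set of levels of summably small $\rho$-measure, keeping $\|f\|_{L^p}$ finite, while still leaving a subinterval $T_j$ on which $\int_{T_j}e^{-\frac{\epsilon p}{p-1}t}\rho(t)^{\frac1{1-p}}\,dt$ is large — this is where the hypothesis $\mathcal{R}_{p,\rho}=\infty$ enters — so that $q_j$ still has small $L^p$-norm. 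A minor but necessary point, already implicit in the proof of Lemma~\ref{lem-2.8}, is to run the estimates in $(a)\Rightarrow(b)$ along the union of all geodesic rays to $\xi$ rather than along a single selected ray, which is what makes the identity $\mathscr Tf(\xi)=0$ automatically independent of the choice of ray.
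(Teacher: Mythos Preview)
Your proof is correct and follows essentially the same route as the paper: for $(a)\Rightarrow(b)$ you use Lemma~\ref{lem-2.8} to make the $O_k$-slab integrals of $|f|^p$ and $g_f^p$ tend to zero, extract a good point $x_k$ from the first and control the oscillation over $O_k$ via H\"older and $\mathcal{R}_{p,\rho}<\infty$ from the second; for $(c)\Rightarrow(a)$ you pass to a subsequence with $M_j\ge 4^j$, subdivide $O_{k_j}$ into $2^j$ pieces of equal $\rho$-measure, and pigeonhole to find $T_j$ with $\int_{T_j}e^{-\frac{\epsilon p}{p-1}t}\rho(t)^{\frac{1}{1-p}}\,dt\ge 2^j$, then build the radial tent function exactly as in the paper. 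Your remark about running the estimates over the union of all geodesic rays to $\xi$ is a helpful clarification of what the paper leaves implicit (it simply says ``the arbitrariness of geodesic rays''), but the underlying argument is identical.
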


\begin{proof}  Assume that $\mu(X)=\infty$ and that $p>1$. Since the implication $(b)\Rightarrow(c)$ is obvious, to prove this part of the lemma, it suffices to show the implications $(a)\Rightarrow(b)$ and $(c)\Rightarrow(a)$. We start with the proof of $(a)\Rightarrow(b)$.

$(a)\Rightarrow(b)$. Assume that $\mathcal{R}_{p,\rho}<\infty$.
Let $f\in N^{1,p}(X)$. It follows from \eqref{2.8-2} in Lemma \ref{lem-2.8} that
\begin{equation}\label{lem-33-1}
\int_{Z}\int_{[v_{0},\xi)}\frac{|f(x)|^{p}}{\nu(B_{v_x})}d\mu(x)d\nu(\xi)\lesssim\int_{X}|f(x)|^{p}d\mu(x)<\infty.
\end{equation}

Let $g_{f}$ denote a minimal $p$-weak upper gradient of $f$. Similarly, we get
\begin{equation}\label{lem-33-2}
\int_{Z}\int_{[v_{0},\xi)}\frac{|g_{f}(x)|^{p}}{\nu(B_{v_x})}d\mu(x)d\nu(\xi)\lesssim\int_{X}|g_{f}(x)|^{p}d\mu(x)<\infty.
\end{equation}
By \eqref{lem-33-1} and \eqref{lem-33-2}, for $\nu$-almost every $\xi\in Z$, we have
$$
\int_{[v_{0},\xi)}\frac{|f(x)|^{p}}{\nu(B_{v_x})}d\mu(x)<\infty
\ \ \ \text{and} \ \ \
\int_{[v_{0},\xi)}\frac{|g_{f}(x)|^{p}}{\nu(B_{v_x})}d\mu(x)<\infty.
$$

Let $\{O_{k}\}_{k=1}^{\infty}$ be a sequence of subintervals of $[0,\infty)$ as in the definition of $\mathcal{R}_{p,\rho}$. Then for each $k$, $$\int_{O_{k}}\rho(t)dt=1,$$ and for $\nu$-almost every $\xi\in Z$,
\begin{equation*}
\sum_{k=1}^{\infty}\int_{O_{k}}|f(x(t))|^{p}\rho(t)dt\approx\int_{[v_{0},\xi)}\frac{|f(x)|^{p}}{\nu(B_{v_x})}d\mu(x)<\infty,
\end{equation*}
where $x(t)$ denotes the point in $[v_{0},\xi)$ with $|x(t)|=t$.

Similarly, we have
\begin{equation*}
\sum_{k=1}^{\infty}\int_{O_{k}}|g_{f}(x(t))|^{p}\rho(t)dt\approx\int_{[v_{0},\xi)}\frac{|g_{f}(x)|^{p}}{\nu(B_{v_x})}d\mu(x)<\infty.
\end{equation*}
These guarantee that
\begin{equation}\label{lem-33-5}
\lim_{k\rightarrow\infty}\int_{O_{k}}|f(x(t))|^{p}\rho(t)dt=0 \ \ \text{and} \ \ \lim_{k\rightarrow\infty}\int_{O_{k}}|g_{f}(x(t))|^{p}\rho(t)dt=0.
\end{equation}

We claim that there exists a sequence $\{t_k: t_{k}\in O_{k}\}$ consisting of distinct elements such that
$$
|f(x(t_{k}))|\rightarrow0
$$
as
$k\rightarrow\infty$.
Otherwise, there is $\delta_0>0$ such that for any $N\geq1$, there exists $k_0>N$ satisfying that for all $t\in O_{k_0}$,
$$
|f(x(t))|\geq\delta_0,
$$
which ensures that
$$
\int_{O_{k_0}}|f(x(t))|^{p}\rho(t)dt\geq \delta_0^{p}\int_{O_{k_0}}\rho(t)dt=\delta_0^{p},
$$
since $\int_{O_{k}}\rho(t)dt=1$ for each $k$. This contradicts \eqref{lem-33-5}, and so, the claim is proved.

Since $\mathcal{R}_{p,\rho}<\infty$, it follows from the H\"{o}lder inequality that
\begin{align*}
\sup_{t\in O_{k}}|f(x(t))|\leqslant&|f(x(t_{k}))|+\int_{O_{k}}g_{f}(x(t))e^{-\epsilon t}dt\\
\leqslant&|f(x(t_{k}))|+\bigg(\int_{O_{k}}g_{f}(x(t))^{p}\rho(t)dt\bigg)^{\frac{1}{p}}\bigg(\int_{O_{k}}e^{-\frac{\epsilon p}{p-1}t}\rho(t)^{\frac{1}{1-p}}dt\bigg)^{\frac{p-1}{p}} \,\,\, (\text{by }\eqref{lem-33-5})\\
\lesssim&|f(x(t_{k}))|+\mathcal{R}_{p,\rho}^{\frac{p-1}{p}}\bigg(\int_{O_{k}}g_{f}(x(t))^{p}\rho(t)dt\bigg)^{\frac{1}{p}}\rightarrow0
\end{align*}
as $k\rightarrow\infty$.

Since it follows from $\bigcup_{k=1}^{\infty}O_{k}=[0,\infty)$ that
$$
\bigg|\limsup_{[v_{0},\xi)\ni x\rightarrow\xi}f(x)\bigg| \leq \lim_{k\rightarrow\infty}\sup_{t\in O_{k}}|f(x(t))|,
$$
we infer that for $\nu$-almost every $\xi\in Z$,

$$
\limsup_{[v_{0},\xi)\ni x\rightarrow\xi}f(x)=0.
$$
Hence the arbitrariness of geodesic rays $[v_{0},\xi)$ implies that $\mathscr Tf$ exists, and $\mathscr Tf(\xi)=0$ for $\nu$-almost every $\xi\in Z$.

$(c)\Rightarrow(a)$. Assume that $\mathscr Tf$ exists for every $f\in N^{1,p}(X)$. We check this implication by contradiction. Suppose that $\mathcal{R}_{p,\rho}=\infty$. Then there is a sequence of subintervals $\{O_{k}\}_{k=1}^{\infty}$ with $\bigcup_{k=1}^{\infty}O_{k}=[0,\infty)$ such that
$$
\int_{O_{k}}\rho(t)dt=1 \ \ \  \text{and} \ \ \  \sup_{k\geq1}\int_{O_{k}}e^{-\frac{\epsilon p}{p-1}t}\rho(t)^{\frac{1}{1-p}}dt=\infty.
$$
We select a subsequence of $\{O_{k}\}$, still denoted $\{O_{k}\}$, such that
$$
\int_{O_{k}}e^{-\frac{\epsilon p}{p-1}t}\rho(t)^{\frac{1}{1-p}}dt>4^{k}.
$$
Now, we divide $O_{k}$ into $2^{k}$ intervals
$\{I_{k,l}\}_{l=1}^{2^{k}}$ with $$\int_{I_{k,l}}\rho(t)dt=2^{-k}$$ such that their interior are disjoint. Then there exists at least one interval in $\{I_{k,l}:l=1,2,\ldots,2^{k}\}$ with endpoints $a_k$ and $b_k$ such that
$$
\int_{a_k}^{b_k}\rho(t)dt=2^{-k} \ \ \  \text{and} \ \ \  \int_{a_k}^{b_k}e^{-\frac{\epsilon p}{p-1}t}\rho(t)^{\frac{1}{1-p}}dt>2^{k}.
$$
We define the function $q:[0, \infty)\rightarrow[0, \infty]$ by setting
$$
q(t)=\sum_{k=1}^{\infty}\frac{2e^{-\frac{\epsilon t}{p-1}}\rho(t)^{\frac{1}{1-p}}}{\int_{a_k}^{b_k}e^{-\frac{\epsilon pt}{p-1}}\rho(t)^{\frac{1}{1-p}}dt}\chi_{(a_k, b_k)}(t).
$$
It follows that
\begin{align*}
\int_{0}^{\infty}q(t)^{p}\rho(t)dt=&\sum_{k=1}^{\infty}\int_{a_k}^{b_k}\bigg(\frac{2e^{-\frac{\epsilon t}{p-1}}\rho(t)^{\frac{1}{1-p}}}{\int_{a_k}^{b_k}e^{-\frac{\epsilon pt}{p-1}}\rho(t)^{\frac{1}{1-p}}dt}\bigg)^{p}\rho(t)dt\\
=&\sum_{k=1}^{\infty}2^p\bigg(\int_{a_k}^{b_k}e^{-\frac{\epsilon pt}{p-1}}\rho(t)^{\frac{1}{1-p}}dt\bigg)^{1-p}\\
<&\sum_{k=1}^{\infty}2^p2^{k(1-p)}<\infty,
\end{align*}
and that
$$
\int_{a_k}^{b_k}q(t)e^{-\epsilon t}dt=\int_{a_k}^{b_k}\frac{2e^{-\frac{\epsilon t}{p-1}}\rho(t)^{\frac{1}{1-p}}}{\int_{a_k}^{b_k}e^{-\frac{\epsilon p}{p-1}t}\rho(t)^{\frac{1}{1-p}}dt}e^{-\epsilon t}dt=2
$$
for all $k=1,2,\ldots$.
The for each $k\geq1$, there is $c_{k}\in(a_{k},b_{k})$ such that
$$
\int_{a_k}^{c_k}q(t)e^{-\epsilon t}dt=\int_{c_k}^{b_k}q(t)e^{-\epsilon t}dt=1.
$$

For each $k\geq1$, let
\begin{equation*}
u(x)=
\left\{\begin{array}{cl}
\int_{a_k}^{|x|}q(t)e^{-\epsilon t}dt, \,&\text{if} \ \ |x|\in(a_k, c_k), \\
\int_{|x|}^{b_k}q(t)e^{-\epsilon t}dt, \,&\text{if} \ \ |x|\in[c_k, b_k), \\
0, \,&\text{if} \ \ |x|\in O_k\setminus(a_k, b_k)
\end{array}\right.
\end{equation*} in $X$.
Then for each $k\geq1$, there are a pair of points $y_k$, $z_k$ in $[v_{0},\xi)$ such that $|y_k|=a_k$ and $|z_k|=c_k$, and thus,
$$u(y_k)=0\;\;\mbox{and}\;\; u(z_k)=1.$$

Moreover, let $$g(x):=q(|x|).$$ Then $g$ is an upper gradient of $u$. It follows from Lemma \ref{lem-2.8} and the fact $0<\nu(Z)<\infty$ that
$$
\int_{X}g(x)^{p}d\mu(x)
\approx\int_{Z}\int_{[v_{0},\xi)}\frac{g(x)^{p}}{\nu(B_{v_x})}d\mu(x)d\nu(\xi)
\approx\nu(Z)\int_{0}^{\infty}q(t)^{p}\rho(t)dt<\infty.
$$

Since $u(x)\in[0,1]$ in $X$ and $u(x)\neq0$ only if $|x|\in (a_k, b_k)$, by Lemma \ref{lem-2.8}, we obtain
$$
\int_{X}|u(x)|^{p}d\mu(x)\lesssim\sum_{k=1}^{\infty}\int_{\{x\in X:|x|\in (a_k, b_k)\}}d\mu(x)\lesssim\sum_{k=1}^{\infty}\int_{a_k}^{b_k}\rho(t)dt<\infty.
$$
This implies that $u\in N^{1,p}(X)$, however, it has no limit along any geodesic ray from $v_{0}$ to $\xi$. This contradicts the assumption that $\mathscr Tf$ exists for every $f\in N^{1,p}(X)$.
\end{proof}

\subsection*{Proof of Theorem \ref{thm-1.1}}
\begin{proof}[Proof of (i)]
When $\mu(x)<\infty$, we have that $R_{p,\rho}=\mathcal{R}_{p,\rho}$.
The necessity follows from Lemma \ref{lem-3.1}, and the sufficiency follows from the conclusions $(2)$ and $(3)$ in Lemma \ref{lem-3.2}.
When $\mu(x)=\infty$, the necessity follows from Lemma \ref{lem-3.3}, the fact $\mathcal{R}_{1,\rho}=R_{1,\rho}$ and Lemma \ref{lem-3.1}, and the sufficiency follows from Lemmas \ref{lem-3.2}$(2)$ and \ref{lem-3.3}.
\end{proof}

\begin{proof}[Proof of (ii)]
It follows directly from Lemmas \ref{lem-3.1} and \ref{lem-3.2}$(1)$.
\end{proof}

\section{Relations between $\mathscr Tu$ and $\widetilde{u}$}\label{sec-4}

The purpose of this section is to prove Theorem \ref{thm-imply} and provide the construction of Example \ref{exm-1.1}. We start the definition of the trace functions introduced in \cite{BBS}.

For a function $u\in N^{1, p}(X)$, its trace (function) $\widetilde{u}$ is defined as follows (see \cite[Section 11]{BBS}):
Let
\begin{equation}\label{equ-tracef}
u_{n}(\xi)=\frac{1}{\#A_{n}(\xi)}\sum_{z\in A_{n}(\xi)}u((z, n)).
\end{equation}
If the limit
$$
\widetilde{u}(\xi)=\lim_{n\rightarrow\infty}u_{n}(\xi)
$$
exists for $\nu$-almost every $\xi\in Z$, then we say that $\widetilde{u}$ exists, which is called the {\it trace $($function$)$ of $u$}.

Here, we recall that
$$
A_{n}(\xi)=A_{n}\cap B_{Z}(\xi, \alpha^{-n}) \ \ \ \ \text{and} \ \ \ V_{n}(\xi)=\{(x,n)\in V_{n}:x\in A_{n}(\xi)\}.
$$

\subsection*{Proof of Theorem \ref{thm-imply}}
\begin{proof}
For any $\xi\in Z$, let
$$
E(\xi):=\cup_{n\geq0}E_{n}(\xi),
$$
where $E_{n}(\xi)$ is the set of all vertical edges with endpoints $v_{n}(\xi)\in V_{n}(\xi)$ and $v_{n+1}(\xi)\in V_{n+1}(\xi)$. By Proposition \ref{prop-2.1}, $\#(V_{n}(\xi))\leq K$ for every $n\geq0$. Then there are $\ell$ geodesic rays
$
J_{1}, J_{2}, \ldots, J_{\ell}
$
from $v_{0}$ to $\xi$ such that
$$
E(\xi)=J_{1}\cup J_{2}\cup\ldots\cup J_{\ell},
$$ where $\ell$ is a positive integer not exceeding $K^{2}$.
Note that these geodesics need not be disjoint.

Let $u$ be a measurable function defined on $X$. By the definition of $\mathscr Tu$, we see that for $\nu$-almost every $\xi\in Z$, the limit $\mathscr Tu(\xi)=\lim_{[v_{0},\xi)\ni x\rightarrow\xi}u(x)$
exists, and is independent of the choice of the geodesic rays $J_{i}$, where $1\leq i\leq \ell$. We denote the limit by $A$, i.e., $\mathscr Tu(\xi)=A$. Then for any $\varepsilon>0$ and each $i\in \{1,\ldots,\ell\}$, there exists an integer $N_{i}>0$ such that whenever $x\in J_{i}$ and $|x|>N_{i}$,
$$
|u(x)-A|<\varepsilon.
$$

Let $N=\max\{N_1, N_2, \ldots, N_{\ell}\}$. Then for any $x\in E(\xi)$ with $|x|>N$,
$$
|u(x)-A|<\varepsilon,
$$
which implies that for any $n>N$,
$$
|u_{n}(\xi)-A|<\varepsilon.
$$
Thus the limit $\lim_{n\rightarrow\infty}u_{n}(\xi)$ exists, and also, we have
$$\mathscr Tu(\xi)=\widetilde{u}(\xi)=A.$$ This proves the theorem.
\end{proof}

\subsection*{Construction of Example \ref{exm-1.1}}
%\begin{example}
%Let $1\leq p<\infty$. Suppose that $\epsilon>0$ and $\rho(t)=e^{-t^{2}-\epsilon pt}$. Then there exists a function $u\in N^{1,p}(X)$ such that the trace $\widetilde{u}$ exists, but the trace
%$
%\mathscr Tu
%$
%does not exist.
%\end{example}

Let $1\leq p<\infty$ and $\rho(t)=e^{-t^{2}-\epsilon pt}$. By Lemma \ref{lem-2.8} and the fact $0<\nu(Z)<\infty$, we have
$$
\mu(X)=\int_{X}d\mu\lesssim\int_{0}^{\infty}e^{-t^{2}-\epsilon pt}dt<\infty.
$$
Also, elementary calculations guarantee that $\rho\in L^{p}([0,\infty))$, and for $1\leq p<\infty$, $\mathcal{R}_{p,\rho}=\infty$.

We divide the arguments into two cases: $p>1$ and $p=1$.
Assume that $p>1$. Let
$$
q_1(t)=\sum_{n\in\mathbb N}\frac{2e^{\frac{t^{2}}{p-1}+\epsilon t}}{\int_{n}^{n+1}e^{\frac{t^{2}}{p-1}}dt}\chi_{[n, n+1)}(t)
$$ in $[0,\infty)$.

Moreover, for each $n\in\mathbb{N}$, there is $m'_{n}\in(n, n+1)$ such that
$$
\int_{n}^{m'_{n}}e^{\frac{t^{2}}{p-1}}dt=\frac{1}{2}\int_{n}^{n+1}e^{\frac{t^{2}}{p-1}}dt,
$$
and let
\begin{equation*}
u_1(x)=
\left\{\begin{array}{cl}
\int_{n}^{|x|}q_1(t)e^{-\epsilon t}dt, \,&\text{if} \ \ |x|\in[n, m'_{n}), \\
\int_{|x|}^{n+1}q_1(t)e^{-\epsilon t}dt, \,&\text{if} \ \ |x|\in[m'_{n}, n+1)
\end{array}\right.
\end{equation*} in $X$. Then $0\leq u_1(x)\leq 1$ in $X$, $u_1(x)=0$ for $x\in X$ with $|x|=n$, and $u_1(x)=1$ for $x\in X$ with $|x|=m'_{n}$. This implies that $\mathscr Tu_1(\xi)$ does not exist for any $\xi\in Z$.

However, for each $n\geq0$, $u_1((z, n))=0$ since $|(z, n)|=n$ for $z\in A_n$. This implies that
$$
u_{1, n}(\xi)=\frac{1}{\#A_{n}(\xi)}\sum_{z\in A_{n}(\xi)}u_1((z, n))=0,
$$
and so, $\widetilde{u_1}(\xi)=0$ for every $\xi\in Z$. This implies the existence of $\widetilde{u_1}$.

In the following, we show that $u_1\in N^{1, p}(X)$. First, we estimate the $L^{p}$-norm of $u_1$.
Since $0\leq u_1(x)\leq1$, we have
$$
\int_{X}|u_1(x)|^{p}d\mu\leqslant\mu(X)<\infty.
$$

Let $g_1(x):=q_1(|x|)$ in $X$. Then $g_1$ is an upper gradient of $u_1$. It follows from Lemma \ref{lem-2.8} and $0<\nu(Z)<\infty$ that
\begin{align*}
\int_{X}g_1(x)^{p}d\mu\lesssim&\int_{Z}\int_{[v_{0},\xi)}\frac{q_1(|x|)^{p}}{\nu(B_{v_x})}d\mu_1(x)d\nu(\xi)\\
\lesssim&\int_{0}^{\infty}q_1(t)^{p}e^{-t^{2}-\epsilon pt}dt
=\sum_{n\in\mathbb{N}}\int_{n}^{n+1}\frac{2^{p}e^{\frac{t^{2}}{p-1}}}{(\int_{n}^{n+1}e^{\frac{t^{2}}{p-1}}dt)^{p}}dt\\
\leqslant&\sum_{n\in\mathbb{N}}2^{p}(e^{\frac{n^{2}}{p-1}})^{1-p}=2^{p}\sum_{n\in\mathbb{N}}e^{-n^{2}}<\infty.
\end{align*}
These ensure that $u_1\in N^{1,p}(X)$.

Assume that $p=1$. Let
$$
q_2(t)=\sum_{n\in\mathbb{N}}\frac{2e^{t^{2}+\epsilon t}}{\int_{n}^{n+1}e^{t^{2}}dt}\chi_{[n,n+1)}(t)
$$
in $[0,\infty)$.
Moreover, for each $n\in\mathbb{N}$, there is $m''_{n}\in(n, n+1)$ such that
$$
\int_{n}^{m''_{n}}e^{t^{2}}dt=\frac{1}{2}\int_{n}^{n+1}e^{t^{2}}dt.
$$
Let
\begin{equation*}
u_2(x)=
\left\{\begin{array}{cl}
\int_{n}^{|x|}q_2(t)e^{-\epsilon t}dt, \,&\text{if} \ \ |x|\in[n, m''_{n}), \\
\int_{|x|}^{n+1}q_2(t)e^{-\epsilon t}dt, \,&\text{if} \ \ |x|\in[m''_{n}, n+1)
\end{array}\right.
\end{equation*}  in $X$.
Then $0\leq u_2(x)\leq 1$ in $X$, $u_2(x)=0$ for $x\in X$ with $|x|=n$, and $u_2(x)=1$ for $x\in X$ with $|x|=m''_{n}$. This implies that $\mathscr Tu_2(\xi)$ does not exist for any $\xi\in Z$, and so, $\mathscr Tu_2(\xi)$ does not exist.

However, for each $n\geq0$, $u_2((z, n))=0$ since $|(z, n)|=n$ for $z\in A_n$. This implies that
$$
u_{2, n}(\xi)=\frac{1}{\#A_{n}(\xi)}\sum_{z\in A_{n}(\xi)}u_2((z, n))=0,
$$
and so, $\widetilde{u_2}(\xi)=0$ for every $\xi\in Z$. This implies the existence of $\widetilde{u_2}$.

Still, it remains to show that $u_2\in N^{1, 1}(X)$. First, we estimate the $L^{1}$-norm of $u_2$. Since $0\leq u_2(x)\leq1$, we have
$$
\int_{X}u_2(x)d\mu\leqslant\mu(X)<\infty.
$$

Let $g_2(x):=q_2(|x|)$ in $X$. Then $g_2$ is an upper gradient of $u_2$.
It follows from Lemma \ref{lem-2.8} and the fact $0<\nu(Z)<\infty$ that
\begin{align*}
\int_{X}g_2(x)d\mu\lesssim&\int_{Z}\int_{[v_{0},\xi)}\frac{q_2(|x|)}{\nu(B_{v_x})}d\mu(x)d\nu(\xi)\\
\lesssim&\int_{0}^{\infty}q_2(t)e^{-t^{2}-\epsilon t}dt
=\sum_{n\in\mathbb{N}}\int_{n}^{n+1}\frac{2}{\int_{n}^{n+1}e^{t^{2}}dt}dt\\
\leqslant&\sum_{n\in\mathbb{N}}2e^{-n^{2}}<\infty.
\end{align*}
These ensure that $u_2\in N^{1,1}(X)$.

\subsection*{Acknowledgments}
The second author (Zhihao Xu) was partly supported by NNSFs of China under the numbers 12071121 and 12101226.

\vspace*{5mm}

\noindent Manzi Huang,

\noindent
MOE-LCSM, School of Mathematics and Statistics, Hunan Normal University, Changsha, Hunan 410081, People's Republic of
China.

\noindent{\it E-mail address}:  \texttt{mzhuang@hunnu.edu.cn}\medskip\medskip

\noindent Zhihao Xu,

\noindent
MOE-LCSM, School of Mathematics and Statistics, Hunan Normal University, Changsha, Hunan 410081, People's Republic of
China.

\noindent{\it E-mail address}:  \texttt{734669860xzh@hunnu.edu.cn}
\end{document}